\def\serieslogo@{} \def\@setcopyright{} \makeatother
\crefname{subsection}{Subsection}{Subsections}
\renewcommand*\env@matrix[1][c]{\hskip -\arraycolsep
	\let\@ifnextchar\new@ifnextchar
	\array{*\c@MaxMatrixCols #1}}
\def\@endtheorem{\endtrivlist}% NEW
\newcommand{\C}{\mathscr{C}}
\newcommand{\D}{\mathscr{D}}
\newcommand{\La}{\Lambda}
\newcommand{\Ll}{\mathcal{l \! l}}
\newcommand{\lal}{\mathcal{l }}
\newcommand{\BQ}{ \mathcal{B}_{ Q} }
\newcommand{\BQA}{ \mathcal{B}_{\! Q}^{ A} }
\newcommand{\BQnotA}{ \mathcal{B}_Q^{\, \nott \!\! A} }
\newcommand{\epic}{\twoheadrightarrow}
\newcommand{\monicc}{\hookrightarrow}
\newcommand{\la}{\langle}
\newcommand{\ra}{\rangle}
\newcolumntype{C}[1]{>{\centering\arraybackslash}m{#1}}
\newcommand\TTT{\rule{0pt}{2.6ex}}       % Top strut
\newcommand\BBB{\rule[-1.2ex]{0pt}{0pt}} % Bottom strut
\DeclareMathOperator{\pd}{\mathsf{pd}}
\DeclareMathOperator{\fd}{\mathsf{fd}}
\DeclareMathOperator{\idim}{\mathsf{id}}
\DeclareMathOperator{\id}{\mathrm{id}}
\DeclareMathOperator{\Hom}{\mathrm{Hom}}
\DeclareMathOperator{\Ker}{\mathrm{Ker} \!}
\DeclareMathOperator{\Image}{\mathrm{Im} \!}
\DeclareMathOperator{\rmod}{\! - \mathrm{mod}}
\DeclareMathOperator{\rMod}{\! - \mathrm{Mod}}
\DeclareMathOperator{\nott}{\mathrm{not}}
\DeclareMathOperator{\fpd}{\mathsf{fin.dim}}
\DeclareMathOperator{\Fpd}{\mathsf{Fin.dim}}
\DeclareMathOperator{\End}{\mathsf{End}}
\DeclareMathOperator{\rad}{\mathrm{rad}}
\DeclareMathOperator{\topp}{\mathsf{top}}
\DeclareMathOperator{\gd}{\mathsf{gl.dim}}
\DeclareMathOperator{\wgd}{\mathsf{w.gl.dim}}
\DeclareMathOperator{\Ext}{\mathrm{Ext}}
\DeclareMathOperator{\Tor}{\mathrm{Tor}}
\newtheorem{thm}{Theorem}[section]
\newtheorem{cor}[thm]{Corollary}
\newtheorem{lem}[thm]{Lemma}
\newtheorem{prop}[thm]{Proposition}
\newtheorem*{thmA}{Theorem A}
\newtheorem*{thmB}{Theorem B}
\theoremstyle{definition}
\newtheorem{defn}[thm]{Definition}
\newtheorem{exam}[thm]{Example}
\newtheorem{con}[thm]{Construction}
\newtheorem*{defnn}{Definition}
\theoremstyle{remark}
\newtheorem{rem}[thm]{Remark}
\def\a{\alpha}
\def\b{\beta}
\def\g{\gamma}
\def\d{\delta}
\def\i{\iota}
\begin{document}

	\title{Radical preservation and the finitistic dimension}

	\author[Giatagantzidis]{Odysseas Giatagantzidis}
	\address{O.~Giatagantzidis\\
		Dept of Mathematics\\
		AUTh\\
		54124 Thessaloniki\\
		Greece}
	\email{odysgiat@math.auth.gr}
	
	\date{\today}

	\keywords{Radical preservation, Jacobson radical, Induction functor, Finitistic dimension, Global dimension, Bound quiver algebras, Quasi-uniform Loewy length}
	
	\subjclass[2020]{%
		%16D60,	% Simple and semisimple modules, primitive rings and ideals in associative algebras
		%16D90,	% Module categories in associative algebras
		16E05,	% Syzygies, resolutions, complexes in associative algebras
		16E10,	% Homological dimension in associative algebras
		%16G10,	% Representations of associative Artinian rings
		16G20,	% Representations of quivers and partially ordered sets
		16L30,	% Noncommutative local and semilocal rings, perfect rings
		16N20,	% Jacobson radical, quasimultiplication
		%16P10,	% Finite rings and finite-dimensional associative algebras
		18A22}	% Special properties of functors (faithful, full, etc.)

	\begin{abstract}
		We introduce the notion of radical preservation and prove that a radical-preserving homomorphism of left artinian rings of finite projective dimension with superfluous kernel reflects the finiteness of the little finitistic, big finitistic and global dimension. As an application, we prove that every bound quiver algebra with quasi-uniform Loewy length, a class of algebras introduced in this paper, has finite (big) finitistic dimension. The same result holds more ge\-ne\-rally in the context of semiprimary rings. Moreover, we construct an explicit family of such finite dimensional algebras where the finiteness of their big finitistic dimension does not follow from existing results in the literature.
	\end{abstract}

	\maketitle

	\vspace*{-0.6cm}

	\setcounter{tocdepth}{1} \tableofcontents{}

	\phantom{asdf}
	
	{}\vspace*{-0.8cm}

	\section{Introduction and main results}

	\smallskip
	
	One of the most important homological dimensions of a ring $R$ is the \emph{little finitistic dimension} introduced by Auslander and Buchsbaum \cite{findimdef}{}, denoted by $\fpd R$, which is defined to be the supremum of the projective dimensions of finitely generated \emph{left} modules with finite projective dimension. 
	Its usefulness lies on the fact that it provides a more accurate measure for the homological complexity of the category of finitely generated modules compared to the global dimension, denoted by $\gd R$, when the latter is infinite.

	A few years later, Bass \cite{Bass}{} publicized the question of Rosenberg and Zelinsky  whether the little finitistic dimension is finite for every ring. We know now that there are rings with infinite little finitistic dimension, with one of the first examples given by Nagata in the early 1960's in the context of commutative noetherian rings, see for example \cite{Krause}{}. Another example was given by Kirkman and Kuzmanovich \cite{KirkKuzm}{} in the context of semiprimary non-noetherian rings. 
	However, the question remains open for Artin algebras and it has been promoted to a conjecture, namely the \emph{Finitistic Dimension Conjecture $\mathsf{(FDC)}$}, often considered in the context of finite dimensional algebras. We refer to the survey \cite{Huisgentale}{} for an overview of classes of Artin algebras that were known to satisfy the $\mathsf{(FDC)}$ up to 1995.

	Another relevant dimension which has received its fair share of attention is the big finitistic dimension due to Kaplansky (see introduction of \cite{Bass}{}), denoted by $\Fpd R$, which resembles the little one with the difference that the supremum is taken over all left modules (not necessarily finitely generated) with finite projective dimension. Huisgen \cite{Huisgen, Huisgen2}{} proved that this dimension can be strictly larger than its little counterpart by considering appropriate monomial bound quiver algebras.

	In this paper, we introduce the notion of radical preservation which turns out to be a key notion for the establishment of new reduction results for the finiteness of the finitistic dimensions of Artin algebras, such as \hyperlink{thmA}{Theorem A} of this paper and the results of \cite{Giata2}{}.
	Moreover, we introduce the class of bound quiver algebras with quasi-uniform Loewy length, see definition below, and prove that every such algebra satisfies the $\mathsf{(FDC)}$.

	A ring homomorphism $\phi \colon A \to B $ is called \emph{radical-preserving} if the image of the Jacobson radical of $A$ under $\phi $ is contained in the Jacobson radical of $B$.
	Furthermore, the kernel of $\phi $ is called \emph{superfluous} if it is contained in the Jacobson radical of $ A $, and $\phi $ is \emph{of finite projective dimension} if the projective dimension of $B$ viewed as a \emph{right} $ A $-module via restriction of scalars along $ \phi $ is finite.

	Our first main result restricted to the class of Artin algebras is the following.
	
	\begin{thmA}[\cref{cor:1}{}]	\phantomsection	\hypertarget{thmA}
		If $\phi \colon A \to B$ is a radical-preserving homomorphism of Artin algebras with superfluous kernel, then it holds that
			\begin{equation*}
				\fpd A \leq \fpd B + \pd B_A .
			\end{equation*}
		Moreover, the analogous inequalities hold for the big finitistic and global dimensions of the algebras.
	\end{thmA}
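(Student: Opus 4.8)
The plan is to transport a minimal projective resolution over $A$ through the induction functor $B\otimes_A(-)$ and to read off the conclusion from the projective dimension of one truncated cokernel; no spectral sequence or change-of-rings long exact sequence is needed. If $\fd B_A=\infty$ every inequality is vacuous, so put $d:=\fd B_A<\infty$. Two observations carry the whole argument, and they are exactly what the two hypotheses supply, so I would first isolate them as lemmas. \emph{(i) Induction preserves minimality.} The functor $B\otimes_A(-)$ sends projectives to projectives, and if $f\colon P\to P'$ is a morphism of projective $A$-modules with $\Image f\subseteq\rad P'=\rad(A)P'$, then $\Image(\id_B\otimes f)$ is contained in the image of $B\otimes_A\rad(A)P'$ inside $B\otimes_AP'$, hence in $\rad(B)\cdot(B\otimes_AP')=\rad(B\otimes_AP')$, because $\phi(\rad A)\subseteq\rad B$ and $A,B$ are semiperfect (so the radical of a module is the ideal-radical times the module). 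Thus $B\otimes_A(-)$ carries a minimal complex of projective $A$-modules to a minimal complex of projective $B$-modules. \emph{(ii) Induction kills no nonzero projective.} A nonzero projective $A$-module has a direct summand $Ae$ with $e$ a primitive idempotent, and $\phi(e)\ne0$ — otherwise $e\in\Ker\phi\subseteq\rad A$, impossible since $\rad A$ has no nonzero idempotent — so $B\otimes_AAe$ is a nonzero direct summand of $B\otimes_AA\cong B$.

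Now take a finitely generated $A$-module $M$ with $n:=\pd_A M<\infty$; I may assume $n>d$, since otherwise $n\le d\le\fpd B+d$ because $\fpd B\ge0$. Fix a minimal projective resolution $0\to P_n\xrightarrow{\partial_n}P_{n-1}\to\cdots\to P_0\to M\to0$ over $A$; as $A$ is an Artin algebra this exists, has length exactly $n$, and has $P_n\ne0$. Applying $B\otimes_A(-)$ to its deleted part yields a complex $Q_\bullet\colon 0\to Q_n\to\cdots\to Q_0\to0$ of projective $B$-modules, $Q_i:=B\otimes_AP_i$, which by (i) is minimal and by (ii) has $Q_n\ne0$, and whose homology is $H_0(Q_\bullet)=B\otimes_AM$ and $H_i(Q_\bullet)=\Tor_i^A(B,M)$ for $i\ge1$. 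Since $\fd B_A=d$, one has $H_i(Q_\bullet)=0$ for every $i>d$. Hence, writing $C:=\operatorname{coker}(\partial^{Q}_{d+1}\colon Q_{d+1}\to Q_d)$, the truncated complex
\[
0\longrightarrow Q_n\longrightarrow\cdots\longrightarrow Q_{d+1}\xrightarrow{\ \partial^{Q}_{d+1}\ }Q_d\longrightarrow C\longrightarrow0
\]
is exact — in positive degrees its homology coincides with $H_i(Q_\bullet)=\Tor_i^A(B,M)$ for $i>d$, which vanishes — consists of projective $B$-modules, and is minimal because $Q_\bullet$ is; so it is the minimal projective resolution of $C$, and $\pd_B C=n-d$, the position of its nonzero top term $Q_n$. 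As $C$ is a quotient of the finitely generated $B$-module $Q_d$ and has finite projective dimension, $\fpd B\ge n-d$, i.e.\ $n\le\fpd B+\fd B_A$. Taking the supremum over $M$ gives $\fpd A\le\fpd B+\fd B_A$.

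For the big finitistic dimension the identical argument applies to an arbitrary (not necessarily finitely generated) $A$-module $M$ of finite projective dimension: Artin algebras are perfect rings, so $M$ still has a minimal projective resolution, (i) and (ii) still hold, and the truncated complex is again a minimal projective resolution of a $B$-module $C$ with $\pd_B C=\pd_A M-d$; hence $\Fpd A\le\Fpd B+\fd B_A$. For the global dimension, assume $\gd B<\infty$ and $d<\infty$ (otherwise there is nothing to prove). Were some $A$-module of infinite projective dimension, applying $B\otimes_A(-)$ to its infinite minimal projective resolution and truncating at degree $d$ would, exactly as above, produce an infinite minimal projective resolution of a $B$-module, contradicting $\gd B<\infty$; so every $A$-module has finite projective dimension, and then the estimate above, applied to each $M$, exhibits a $B$-module of projective dimension $\pd_A M-d$, so $\pd_A M-d\le\gd B$, whence $\gd A\le\gd B+\fd B_A$. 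The hard part will be observations (i) and (ii) — these are precisely the content of ``radical preservation'' and ``superfluous kernel'' — together with the bookkeeping that the brutally truncated complex genuinely is the minimal projective resolution of its bottom cokernel; the finiteness of $\fd B_A$ enters only through the vanishing $\Tor_i^A(B,M)=0$ for $i>d$, which is exactly what makes the truncation legitimate.
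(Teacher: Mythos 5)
Your proof is correct and follows essentially the same route as the paper: your observations (i) and (ii) are exactly the paper's Lemmas 3.4 and 3.5 (induction by a radical-preserving map preserves minimality, and a superfluous kernel guarantees no nonzero projective is killed), and your truncation of the tensored minimal resolution at degree $d$ is the same computation the paper packages via the $\phi$-flat $d$-th syzygy $\Omega^d_A(M)$ in Proposition 3.6, since the cokernel $C$ you form is precisely $B\otimes_A\Omega^d_A(M)$. The only difference is expository — the paper states the result more generally for left noetherian semiperfect (resp.\ left perfect) rings, but for Artin algebras the arguments coincide.
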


	We remark that special instances of radical-preserving monomorphisms, like radical embeddings \cite{EHIS}{} and radical-full monomorphisms \cite{FDCandfingldim}{}, have been studied before.

	It is also worth mentioning that a similar result to the above exists for surjective ring homomorphisms under fewer assumptions on the rings, see \cite[Theorem~1]{Small}{} and \cite[Theorem~1.8]{KKS}{}. 
	However, the technique of \cite{Small}{} cannot be employed if one drops the surjectivity assumption; see end of \cref{sec:rad.pres.homs}{} for details.
	In contrast to that technique, we compute module projective dimensions via minimal projective resolutions instead of arbitrary projective resolutions of minimal length. For this reason, \hyperlink{thmA}{Theorem A} is actually proven in the contexts of left noetherian semiperfect rings (for $\fpd $ and $\gd $) and of left perfect rings (for $\Fpd $ and $\gd $).

	An important additional feature of radical-preserving homomorphisms is that they are abundant and extend the class of surjective ring homomorphisms significantly. In particular, we show that every homomorphism $\phi \colon A \to B$ of Artin algebras is radical-preserving whenever $B$ is basic, see \cref{propbasic}{}.
	As every Artin algebra is Morita equivalent to its basic version, we deduce that
	\emph{the $\mathsf{(FDC)}$ holds for Artin algebras if and only if for every basic Artin algebra $A$ there exists a homomorphism $\phi \colon A \to B$ with superfluous kernel of finite projective dimension, where $B$ is a basic Artin algebra that satisfies the $\mathsf{(FDC)}$}.
	
	Clearly, it is not an easy task to establish such a homomorphism in general.	
	Nonetheless, for any bound quiver algebra $\La $, we construct an algebra $\La\!^* $ with $ \Fpd \La^* = 0 $, related to $\La $ via a radical-preserving monomorphism $ \i \colon  \La \monicc \La\!^*$; see \cref{con:q.unif.Ll.1}{} and \hyperlink{lem:q.unif.Ll.2}{\cref{lem:q.unif.Ll.2}{}}{}.
	Furthermore, the monomorphism $ \i $ is of finite projective dimension precisely when $\La $ has \emph{quasi-uniform Loewy length} (see \cref{rem:1}{}), a property that we introduce as follows.

	\begin{defnn}[\cref{defn:q.unif.Ll.1}{}]
			A semiprimary ring $ \La $ has quasi-uniform Loewy length if the Loewy length of every indecomposable projective module whose top has infinite injective dimension is maximal, i.e.\ equal to the Loewy length of $ \La $.
	\end{defnn}

	Our second main result, proven for bound quiver algebras alternatively by applying \hyperlink{thmA}{Theorem A} to the monomorphism $ \i \colon \La \monicc \La\!^*$, is as follows.

	\begin{thmB}[\cref{thm:B}{}, \cref{thm:q.unif.Ll}{}, \cref{cor:2}{}]			\phantomsection	\hypertarget{thmB}
		For every semiprimary ring $ \La $ with quasi-uniform Loewy length, it holds that $ \Fpd \La $ is finite. If $ \La $ is in particular an Artin algebra, then $ \Fpd \La $ is bounded above by $ \fpd \La^{ \! \mathrm{op} } $.
	\end{thmB}

	Furthermore, if the global dimension of a semiprimary ring is finite, then it follows from \cref{thm:B}{} that it is equal to the injective dimension of a simple module whose projective cover has non-maximal Loewy length; see \cref{cor:q.unif.Ll}{}.

	Last, we construct an infinite two-parameter family of bound quiver algebras with quasi-uniform Loewy length, see \cref{exam:q.unif.Ll}{}, where the finiteness of their big finitistic dimension does not follow from existing results in the literature. Although the finiteness of their little finitistic dimension can be deduced from the vertex removal operation \cite{arrowrem1}{}, we show that our bound is arbitrarily smaller for the family. To the best of our knowledge, our example is the first one showing that the difference $ \fpd \La ^{ \! \mathrm{op}} - \Fpd \La $ can be arbitrarily big for non-monomial bound quiver algebras $\La $ with non-zero finitistic dimensions, see \cref{cor:q.unif.Ll.1}{}.

	We close this introduction by outlining the contents of the paper. In \cref{sec:preliminaries}{}, we collect standard facts about semiperfect rings and useful implications thereof.
	In \cref{sec:rad.pres.homs}{}, we characterize radical preservation in homological terms (see \cref{prop:reverse}{}) and prove \hyperlink{thmA}{Theorem A}, applied subsequently to extend several existing results.
	\cref{sec:algebras.of.q.unif.Ll}{} is devoted to the proof of \hyperlink{thmB}{Theorem~B} in its full generality, and in the specific context of bound quiver algebras through the radical-preserving monomorphism $ \i \colon  \La \monicc \La\!^*$.
	We conclude the paper with \cref{exam:q.unif.Ll}{} and \cref{cor:q.unif.Ll.1}{} mentioned above.

	\subsection*{Notation}

	We denote by $J( R)$ the Jacobson radical of an associative ring $R$ with unit. A module over a ring will be a \emph{left} module unless stated otherwise.
	For a module ${}_R M$, we denote by $\rad _R M$ its radical and by $\topp_R M$ the induced quotient $M / \rad _R M $. By $\pd {}_R M$, $\fd {}_R M$ and $\idim {}_R M$ we denote the projective, flat and injective dimension of $ M $, respectively. Similarly, we write $N_R$ to denote a right $R$-module, its radical is denoted by $\rad N_R $ and so on.
	We denote by $R \rMod$ (resp.\ $R \rmod$) the category of left (finitely generated) $R$-modules. The respective right module categories are denoted by $ R^{\mathrm{op}} \rMod $ and $ R^{\mathrm{op}} \rmod $. The little finitistic, big finitistic and global dimension of $R$ are denoted by $\fpd R$, $\Fpd R$ and $\gd R$, respectively.
	If $R$ is a semiprimary ring, then $\Ll (R )$ denotes its Loewy length. Similarly, we denote by $\Ll({}_R M)$ the Loewy length of a module ${}_R M$.

	\subsection*{Acknowledgements}
	
	I would like to thank my Ph.D.\ supervisor, Chrysostomos Psa\-rou\-da\-kis, for several useful discussions regarding this paper. I would also like to express my gratitude to Steffen Koenig for giving me the opportunity to present the first part of the paper in the Algebra Seminar of the University of Stuttgart
	in the winter semester of 2023. Finally, my appreciation goes to the anonymous referee, who pointed out the broader applicability of \cref{thm:q.unif.Ll} to the context of Artin algebras and offered a helpful sketch of proof.

	The present research project was supported by the Hellenic Foundation for
	Research and Innovation (H.F.R.I.) under the ``3rd Call for H.F.R.I.\ Ph.D.\
	Fellowships'' (F.N.: 47510/03.04.2022).

	\medskip
	
	\section{Preliminaries on semiperfect rings}	\label{sec:preliminaries}

	\smallskip

	In this section, we collect well-known facts about semiperfect rings and derive useful implications thereof. We refer the reader to \cite{AndersonFuller}{} for more details.
	
	A ring $R$ is called \emph{semiperfect} if any of the following equivalent conditions holds.
	\begin{enumerate}[\rm(i)]
		\item $R$ is semilocal and idempotents lift modulo $J( R)$.
		\item Every finitely generated $R$-module possesses a projective cover.
		\item Every simple $R$-module possesses a projective cover.
		\item There is a decomposition of $R$ into a direct sum of local $R$-modules.
	\end{enumerate}
	Recall that $R$ is called \emph{semilocal} if it is semisimple modulo its Jacobson radical.
	
	We remark that semiperfectness is a left-right symmetric notion, as is evident from condition (i), and condition (iv) readily implies that every local ring is semiperfect being a local module over itself.
	
	A finite subset $\{ e_i \}_{i \in I} $ of a semiperfect ring $R$ is called a \emph{complete set of primitive orthogonal idempotents} if the following hold. (i) Every $e_i$ is a primitive idempotent, that is $e_i^2 =e_i$ and the left ideal $Re_i$ is indecomposable; (ii) it holds that $e_{i_1} e_{i_2} = 0 $ for every pair of distinct indices $i_1$ and $i_2 $; (iii) $  \sum_i e_i =  1_R $. Moreover, the set $\{ e_i \}_{i \in I'} $ for a subset $I' \subseteq I $ is called a \emph{basic set of primitive orthogonal idempotents} if for every index $i \in I$ there is a unique index $ i ' \in I ' $ such that $ { {}_R Re_i } \simeq { {}_R R e_{i'} } $.
	
	A semiperfect ring $R$ is called \emph{basic} if a complete set of primitive orthogonal idempotents for $R$ is also basic.
	In the following lemma, we relate that property with a connection between the Jacobson radical of $R$ and the set $N(R)$ of nilpotent elements in $R$. Recall that a subset of $R$ is called \emph{nil} if it is contained in $N(R)$.

	\begin{lem}
		\label{lem:0.basic.semiperfect.rings}
		Let $R$ be a semiperfect ring. If $R$ is basic, then $ N(R) $ is contained in $ J( R) $. Conversely, ring $R$ is basic if the two sets are equal. Therefore, if $J(R)$ is nil, ring $R$ is basic if and only if $ J( R) = N(R) $.
	\end{lem}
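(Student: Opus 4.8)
The plan is to reduce everything to the structure of the semisimple ring $\overline R := R/J(R)$. Since $R$ is semiperfect it is in particular semilocal, so $\overline R$ is semisimple artinian, hence by the Wedderburn--Artin theorem a finite product $\prod_k M_{n_k}(D_k)$ of matrix rings over division rings. The structural fact I will invoke from \cite{AndersonFuller} is that a semiperfect ring $R$ is basic if and only if $\overline R$ is a finite product of division rings, i.e.\ if and only if all $n_k$ equal $1$. I will combine this with the elementary observation that a finite product of division rings is reduced (has no nonzero nilpotent element), whereas a matrix ring $M_n(D)$ with $n \geq 2$ contains the nonzero nilpotent $E_{12}$; hence for a semiperfect ring the conditions ``$R$ is basic'' and ``$\overline R$ is reduced'' are equivalent. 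Throughout, $N(R)$ is handled as a mere subset of $R$ (it need not be an ideal), so all arguments are element-wise.

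For the first implication, suppose $R$ is basic and take $x \in N(R)$, say $x^n = 0$. Then the image $\bar x \in \overline R$ satisfies $\bar x^{\,n} = 0$, and since $\overline R$ is reduced this forces $\bar x = 0$, that is $x \in J(R)$. Hence $N(R) \subseteq J(R)$.

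For the converse, assume $N(R) = J(R)$; it suffices to prove that $\overline R$ is reduced. Observe first that the hypothesis gives $J(R) \subseteq N(R)$, so every element of $J(R)$ is nilpotent. Now let $\bar x \in \overline R$ with $\bar x^{\,n} = 0$; then $x^n \in J(R)$, so $x^n$ is nilpotent, whence $x$ itself is nilpotent, i.e.\ $x \in N(R) = J(R)$, and therefore $\bar x = 0$. Thus $\overline R$ is reduced and $R$ is basic.

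The ``in particular'' clause then follows by combining the two halves: if $J(R)$ is nil, then $J(R) \subseteq N(R)$ always holds, so when $R$ is basic the first implication supplies the reverse inclusion and hence $J(R) = N(R)$; the other direction is exactly the converse just established (which in fact never uses that $J(R)$ is nil, since $N(R) = J(R)$ already forces it). I expect the only point needing real care to be pinning down the precise statement and citation of ``$R$ basic $\iff$ $R/J(R)$ a product of division rings'' for semiperfect $R$; the rest is a short manipulation with nilpotents modulo $J(R)$.
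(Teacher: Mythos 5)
Your proof is correct and follows essentially the same route as the paper: both reduce to the Wedderburn--Artin decomposition of $R/J(R)$ and the fact that a semiperfect ring is basic exactly when $R/J(R)$ is a finite product of division rings, then work element-wise with nilpotents modulo $J(R)$. The only cosmetic difference is that you package the key point abstractly as ``$R/J(R)$ is reduced,'' whereas the paper exhibits a concrete nilpotent matrix in $M_{\kappa_j}(D_j)$ for the non-basic direction.
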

	
	\begin{proof}
		If $ \{ e_i \}_{ i \in I }$ is a complete set of primitive orthogonal idempotents for $R$, then $ { R / J } = \oplus_{i \in I} ( { R / J } ) ( { e_i + I } )$ is a decomposition of $R / J$ into a direct sum of simple {$R/J$}-modules as idempotents lift modulo $J = J(R)$. Assume that $I' $ is a subset of $I$ such that $\{ { e_i + J } \}_{ i \in I'}$ is a basic set of primitive orthogonal idempotents for $R / J$ and set $m = | I' |$. According to the Wedderburn-Artin structure theorem for semisimple rings, there are division rings $D_1, \ldots, D_m$ such that $R /J$ is the direct product
		\[
		R / J \simeq  \times_{j=1}^m \mathbb{M}_{\kappa_j} (D_j) 
		\]
		where $\kappa_j$ is the number of indices $i \in I$ such that $ { ( { R/J } ) ( { e_i + J } ) } \simeq { ( { R/J } ) ( { e_j + J } ) } $ for every $j \in I'$, and $\mathbb{M}_{\kappa_j} (D_j)$ is the full ring of $\kappa_j \times \kappa_j$ matrices over $D_j = \End_R ( { R e_j / J e_j } )^{\mathrm{op}} $. 
		
		It holds that $R$ is basic if and only if $R/J$ is basic, see \cite[Proposition~17.18]{AndersonFuller}{}. Therefore, if $R$ is basic then $ I' = I $ and $\kappa_j = 1$ for every $j$. In particular, the ring $R / J $ cannot contain any non-zero nilpotent elements as it is isomorphic to the direct product of $|I|$ division rings. As a consequence, if $x \in N(R)$ then $x + J$ is nilpotent in $R / J$ implying that $x \in J$. Conversely, assume that $J = N(R)$ and $R$ is not basic. Then $\kappa_j > 1$ for some  $ j \in I' $ and, therefore, the ring $\mathbb{M}_{\kappa_j} (D_j)$ contains non-zero nilpotent elements, for example the matrix with first row $(0 \,\, 1 \,\, 0 \,\, \ldots \,\, 0)$ and zero in every other position. We deduce that the ring $R / J$ contains non-zero nilpotent elements. If $x + J$ is such an element, there is an integer $\nu >1 $ such that $x^\nu \in J$ and, since $J$ is nil, it holds that $x \in N(R) = J$, a contradiction.
	\end{proof}
	
	For the sake of completeness, we prove the following fact which is mentioned in \cite[p.~303]{AndersonFuller}{} and will be needed in the next section.
	
	\begin{lem}		\label{lem:0.idempotents}		\hypertarget{lem:0.idempotents}
		For a semiperfect ring $R$, the Jacobson radical is the unique largest ideal of $R$ that contains no non-zero idempotents.
	\end{lem}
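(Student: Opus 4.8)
The plan is to prove two things, whose conjunction is exactly the statement: (a) $J(R)$ itself contains no non-zero idempotent, and (b) every ideal $I$ of $R$ that contains no non-zero idempotent satisfies $I \subseteq J(R)$. Then $J(R)$ is the largest ideal with the stated property, and being the largest it is the unique such. Only part (b) will use semiperfectness, and it genuinely must: for $R = \mathbb{Z}$ the ideal $2\mathbb{Z}$ contains no non-zero idempotent yet is not contained in $J(\mathbb{Z}) = 0$.

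For (a), let $e \in J(R)$ with $e^2 = e$. Since $1$ minus any element of the Jacobson radical is a unit, $1 - e$ is invertible, while $e(1 - e) = e - e^2 = 0$; right-multiplying by $(1-e)^{-1}$ yields $e = 0$. This step needs no hypothesis on $R$.

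For (b) I argue by contraposition: given an ideal $I$ of $R$ with $I \not\subseteq J := J(R)$, I will produce a non-zero idempotent inside $I$. Since $R$ is semiperfect it is semilocal, so $R/J$ is a semisimple ring; hence the non-zero left ideal $\bar I := (I + J)/J$ is a direct summand of the regular left module $R/J$ and therefore contains a non-zero idempotent $\bar c$. (Indeed, writing $R/J = \bar I \oplus \bar K$ as left ideals and $1 = \bar c + \bar k$ with $\bar c \in \bar I$, $\bar k \in \bar K$, for any $\bar x \in \bar I$ one has $\bar x \bar k = \bar x - \bar x \bar c \in \bar I \cap \bar K = 0$, so $\bar x = \bar x \bar c$, and in particular $\bar c = \bar c^{\,2} \neq 0$.) Because $R$ is semiperfect, idempotents lift modulo $J$, so $\bar c = c + J$ for some idempotent $c \in R$; and since $\bar c \in \bar I$ we may write $c = a + j$ with $a \in I$ and $j \in J$.

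It remains to manufacture from $c$ an idempotent genuinely lying in $I$, and this is the step I expect to carry the weight of the argument. I would pass to the corner ring $cRc$: it is a ring with identity $c$, and its Jacobson radical is $J(cRc) = cJ(R)c$. Put $e := cac$. Then $e \in I$ because $a \in I$ and $I$ is two-sided, while $e = c(c - j)c = c - cjc$ with $cjc \in cJ(R)c = J(cRc)$; thus $e$ is the identity of $cRc$ minus an element of $J(cRc)$, hence a unit of $cRc$. Now $I \cap cRc$ is a two-sided ideal of $cRc$ (for $x \in I \cap cRc$ and $s,t \in cRc$ one has $sxt \in I$ since $I$ is an ideal of $R$, and $sxt \in cRc$ since $cRc$ is closed under multiplication), and it contains the unit $e$, so $I \cap cRc = cRc$; in particular $c = 1_{cRc} \in I$. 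Finally $c \neq 0$ because $c + J = \bar c \neq 0$. This completes (b), and together with (a) it shows that $J(R)$ is the unique largest ideal of $R$ that contains no non-zero idempotent.
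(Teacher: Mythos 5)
Your proof is correct and takes essentially the same route as the paper's: lift a non-zero idempotent $c$ from $(I+J)/J$, pass to the corner ring $cRc$, and observe that the component of $c$ lying in $I$ becomes a unit of $cRc$, forcing $c \in I$. A minor improvement over the paper: you avoid taking the lifted idempotent primitive and invoking locality of the corner ring, instead using the general identity $J(cRc)=cJ(R)c$ for an arbitrary idempotent $c$ — this shows the primitivity step in the paper's proof is not actually needed.
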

	
	\begin{proof}
		For any ring $R$ it holds that its Jacobson radical $J = J(R)$ does not contain any non-zero idempotents, see \cite[Corollary~15.11]{AndersonFuller}{}. Let now $I$ be an ideal that does not contain any non-zero idempotents and assume that $I $ is not contained in $ J$. Then $(I + J) / J$ is a non-zero submodule of ${}_R R / J$. Since ${}_R R /J$ is semisimple and idempotents lift modulo $J$, there is a non-zero idempotent $e \in I + J$ which can be assumed to be primitive by considering a simple direct summand of ${}_R (I + J) / J$ for example. Let $e = x + y$ for $x \in I$ and $y \in J$. Then we may assume without loss of generality that $x = e x e $ and $y = e y e $, and it follows that $e - e y e \in I$ and $e y e \in e J e = J(e R e)$. Since $e R e$ is a local ring, we deduce that $e x e = e - e y e$ is invertible in $e R e$ implying that $e \in I$, a contradiction.
	\end{proof}

	An important subclass of semiperfect rings are left perfect rings. A ring $R$ is called \emph{left perfect} if it satisfies any of the following equivalent conditions. 
	\begin{enumerate}[\rm(i)]
		\item Every left $R$-module possesses a projective cover.
		\item $R$ is semilocal and $J( R)$ is left T-nilpotent.
		\item Every flat left $R$-module is projective.
	\end{enumerate}
	Recall that a subset $ T \subseteq R$ is called \emph{left T-nilpotent} if for every sequence $ ( a_i )_{i \in \mathbb{N}} $ in $ T $ there is a positive integer $n$ such that $a_1 a_2 \ldots a_n = 0$.
	
	%	If $ T $ is in particular a left ideal of $R$, then $ T $ is left T-nilpotent if and only if $ T M $ is a superfluous submodule of $ M $ for all $R$-modules $ M $, see \cite[Lemma~28.3]{AndersonFuller}{}. Equivalently, $ T $ is left T-nilpotent if and only if the same property holds for $ M $ ranging over the projective $R$-modules as we prove in \cref{lem:2}{} for $ T = J ( R ) $.
	
	The notion of T-nilpotence is not left-right symmetric and one may define analogously right perfect rings.
	A ring that is both left and right perfect is called \emph{perfect}. 
	Moreover, every left or right perfect ring is semiperfect with nil Jacobson radical.
	
	The following lemma condenses well-known results in a key form, vital for the proof of \hyperlink{thmA}{Theorem~A}.
	
	\begin{lem}
			\label{lem:2}
		Let $ R $ be a ring and $ {}_R P $ a projective module. If $ P $ is finitely generated, then a submodule of $ P $ is superfluous if and only if it is contained in $ \rad_R P = J( R ) P $. Moreover, the following are equivalent:
		\begin{enumerate}[\rm(i)]
			\item A submodule of $ P $ is superfluous if and only if it is contained in $ \rad_R P = J(R) P $ for any (not necessarily finitely generated) $ P $.
			\item The Jacobson radical of $ R $ is left T-nilpotent.
		\end{enumerate}
	\end{lem}

	\begin{proof}
		For any module ${}_R M $ and a superfluous submodule $ N $, it is a straightforward standard fact that $ N \subseteq \rad_R M $.
		As $ \rad _R P = J( R ) P $ for any projective module (see for instance \cite[Proposition~17.10]{AndersonFuller}{}), it follows that a superfluous submodule of $ P $ is contained in $ J ( R ) P $ in any case.
		
		For the other direction of the first claim, recall that $J( R ) M $ is a superfluous submodule of $ M $ for every finitely generated module $ {}_R M $ according to Nakayama's Lemma (see \cite[Corollary~15.13]{AndersonFuller}{}). The claim follows now by taking $ M = P $ to be a finitely generated projective module, since a submodule of a superfluous submodule is itself superfluous in the initial module.
		
		According to \cite[Lemma~28.3]{AndersonFuller}{}, the Jacobson radical of $ R $ is left T-nilpotent if and only if $ J( R ) M $ is a superfluous submodule of $ M $ for any module $ {}_R M $. Therefore, one implication of our second claim is straightforward.
		
		For the other implication, assume that for any projective module $ P $ (not necessarily finitely generated), every submodule contained in $ J( R ) P $ is superfluous. Equivalently, we assume that $ J( R ) P $ is a superfluous submodule of $ P $ for any $ P $, and it suffices to show that $ J( R ) M $ is a superfluous submodule of $ M $ for every module ${}_R M $. Since there is an epimorphism $ P \epic M $ of $ R $-modules for some projective $ {}_R P $, we may identify $ M $ with $ P / K $ where $ K $ is the kernel of the epimorphism. Note that $ J( R ) M $ corresponds to $ ( J( R ) P + K ) / K $ under the implied identification. Therefore, it suffices to show that $ ( J( R ) P + K ) / K $ is a superfluous submodule of $ P / K $ for any submodule $ K $ of a projective module $ P $. Let $ { ( J( R ) P + K ) / K } + { L / K } = P / K $ for a submodule $ L / K $ of $ P / K $. Equivalently, we have that $ J ( R ) P + L = P $ as $ K \subseteq L $, implying that $ L = P $ as $ J ( R ) P $ is a superfluous submodule of $ P $ by assumption. This completes the proof.
	\end{proof}

	We close this section with an observation that rests on well-known results. Its proof is included for the convenience of the reader, as the lemma will be employed in the next section.
	
	\begin{lem}
			\label{lem:1}
		A ring $ R $ is left noetherian left perfect if and only if it is left artinian.
	\end{lem}

	\begin{proof}
		Recall that a ring is left artinian if and only if it is left noetherian, semilocal and its Jacobson radical is a nilpotent ideal, a result due to Hopkins (see for instance \cite[Theorem~15.20]{AndersonFuller}{}). This already implies one direction of the statement, as a semilocal ring with nilpotent Jacobson radical is a special case of a (left) perfect ring. For the other direction, recall that a nil (one-sided) ideal of a left noetherian ring is nilpotent according to a result of Levitzki (see \cite[Theorem~15.22]{AndersonFuller}{}). Therefore, the Jacobson radical of a left noetherian left perfect ring is nilpotent as it is nil, and Hopkins' Theorem applies.
	\end{proof}

	\smallskip

	\section{Radical-preserving homomorphisms} \label{sec:rad.pres.homs}

	\smallskip

	The main objective of this section is to prove \hyperlink{thmA}{Theorem~A} from the introduction in its most general form. Throughout, we fix a ring homomorphism $ \phi \colon A \to B$. We start by introducing radical preservation, the central notion of the paper.

	\begin{defn}
		A ring homomorphism $\phi \colon A \to B $ is called \emph{radical-preserving} if $\phi( J(A) ) \subseteq J(B)$.
	\end{defn}
	
	For the sake of succinctness, we say that the kernel of $\phi$ is \emph{superfluous} if it is contained in $ J( A )$, and $\phi$ will be called \emph{of finite (right) flat dimension} if the flat dimension of $B$ as a right $A$-module via restriction of scalars along $\phi $ is finite. Similarly, we say that $ \phi $ is \emph{of finite (right) projective dimension} if $ \pd B_A < \infty $.

	\begin{rem}
		The term ``superfluous'' comes from the fact that a left (or right) ideal of $A$ is a superfluous submodule of $A$ if and only if it is contained in $ J( A )$, see \cite[Theorem~15.3]{AndersonFuller}{}. 
	\end{rem}
	
	We introduce yet another term that will simplify the exposition.

	\begin{defn}
		A non-zero module ${}_A M$ is called \emph{$\phi$-vanishing} if $B \otimes_A M = 0$ or, else, it is called \emph{$\phi$-nonvanishing}. Similarly, a non-zero homomorphism $f \in A \rMod$ is called \emph{$\phi$-vanishing} if $ B \otimes_A f = 0$ or, else, it is called \emph{$\phi$-nonvanishing}.
	\end{defn}

	In our first lemma, we show that the induction functor $ B \otimes_A -$ along $\phi $ preserves projective covers in a sense that is made precise in \cref{defn:rad.pres}{}, especially when $J(B)$ is a left T-nilpotent ideal of $B$. We refer the reader to \cref{lem:2}{}, as it is essential for the proof of the following lemma.

	\begin{lem}		\label{lem:1.partial.preservation.of.proj.covs}	\phantomsection		\hypertarget{lem:1.partial.preservation.of.proj.covs}{}
		Let $ \phi \colon A \to B $ be a radical-preserving ring homomorphism. Let ${}_A M$ be a non-zero module with projective cover $f \colon P \epic M$. Assume furthermore that
		\begin{enumerate}[\rm(i)]
			\item ${}_A M$ is finitely generated, or
			\item $J(B)$ is a left T-nilpotent ideal of $B$.
		\end{enumerate}	
		Then $M$ is $\phi$-vanishing if and only if $ P $ is $\phi$-vanishing, if and only if $ f $ is $\phi$-vanishing. Moreover, the morphism $ B \otimes_A f$ is a projective cover of $ B \otimes_A M$ when the latter is non-zero.
	\end{lem}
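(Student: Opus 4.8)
The plan is to apply the right-exact functor $B \otimes_A -$ to the short exact sequence $0 \to K \xrightarrow{\iota} P \xrightarrow{f} M \to 0$, where $K = \Ker f$, and to combine this with the two facts recalled just above the lemma, now applied over $B$: that $B \otimes_A P$ is a projective $B$-module, and that a submodule of a projective $B$-module lying inside its radical is superfluous --- which is available for finitely generated projectives in case (i), and for all projective $B$-modules once $J(B)$ is left T-nilpotent in case (ii). Radical-preservation of $\phi$ will of course be used.

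First I would dispose of the formal part. Right-exactness makes $B \otimes_A f \colon B \otimes_A P \to B \otimes_A M$ an epimorphism, so $B \otimes_A M = 0$ whenever $B \otimes_A P = 0$, and $B \otimes_A f = 0$ exactly when $B \otimes_A M = 0$ (its image being all of $B \otimes_A M$). Thus $P$ being $\phi$-vanishing forces $M$ to be $\phi$-vanishing, and $f$ is $\phi$-vanishing if and only if $M$ is; both come for free, and the statement reduces to showing that $M$ $\phi$-vanishing forces $P$ to be $\phi$-vanishing, together with the projective-cover claim.

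The core point is that $\Image(B \otimes_A \iota) = \Ker(B \otimes_A f)$ is a superfluous submodule of $B \otimes_A P$. Since $K$ is superfluous in $P$, we have $K \subseteq \rad_A P = J(A) P$; rewriting a generator $b \otimes x$ of $\Image(B \otimes_A \iota)$, where $x = \sum_j a_j p_j$ with $a_j \in J(A)$, as $\sum_j b\,\phi(a_j) \otimes p_j$ and invoking $\phi(J(A)) \subseteq J(B)$, this element lies in $J(B)(B \otimes_A P) = \rad_B(B \otimes_A P)$. Now $B \otimes_A P$ is projective over $B$, being a direct summand of a free module, and finitely generated in case (i), so the recalled characterization shows $\Image(B \otimes_A \iota)$ is superfluous in $B \otimes_A P$. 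The lemma then follows: if $M$ is $\phi$-vanishing, then $\Image(B \otimes_A \iota) = B \otimes_A P$, and a module coinciding with one of its superfluous submodules is zero, so $P$ is $\phi$-vanishing; and if $B \otimes_A M \neq 0$, then $B \otimes_A f$ is an epimorphism onto $B \otimes_A M$ from the projective $B$-module $B \otimes_A P$ with superfluous kernel $\Image(B \otimes_A \iota)$, which is exactly a projective cover.

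The one non-formal step, hence the main obstacle, is the inclusion $\Image(B \otimes_A \iota) \subseteq \rad_B(B \otimes_A P)$: this is precisely where radical-preservation is essential and cannot be omitted (taking $A$ the algebra of upper-triangular and $B$ that of all $2 \times 2$ matrices over a field, and $M$ the non-projective simple $A$-module, one finds $B \otimes_A M = 0$ while $B \otimes_A P \neq 0$). It is then immediately followed by the passage from being contained in the radical of $B \otimes_A P$ to being superfluous in it, which is exactly where hypotheses (i) and (ii) are used.
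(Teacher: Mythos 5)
Your proof is correct and takes essentially the same route as the paper's: apply $B\otimes_A -$ to the short exact sequence $0 \to K \to P \to M \to 0$, use superfluousness of $K$ to get $K \subseteq J(A)P$, invoke radical-preservation to land $\Ker(B\otimes_A f)$ inside $J(B)(B\otimes_A P) = \rad_B(B\otimes_A P)$, and then use hypothesis (i) or (ii) to upgrade ``contained in the radical'' to ``superfluous.'' The added counterexample with upper-triangular matrices is a nice touch not in the paper, but the argument itself is the same.
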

	
	\begin{proof}
		Let $ \i \colon \Ker f \monicc P$ denote the inclusion homomorphism. It follows from the right exactness of the functor $B \otimes_A -$ that the sequence
		\[
		B \otimes_A \Ker f \xrightarrow{ B \otimes_A \i } B \otimes_A P \xrightarrow{ B \otimes_A f} B \otimes_A M \to 0
		\]
		is exact. In particular, it holds that $B \otimes_A P = 0$ implies $B \otimes_A M = 0$. We have that $\Ker f \subseteq \rad_A P = J(A) P$, since it is a superfluous submodule of $P$, and  
		\[
			{ \Ker \, ( { B \otimes_A f } ) } = { \Image \, ( { B \otimes_A \i } ) } \leq { B \otimes { J(A) P } }  \leq { J(B) \otimes P } = { J(B) ( { B \otimes_A P } ) }
		\]
		where the last module is equal to the radical of $ B \otimes_A P$. It follows that $ \Ker \, ( B \otimes_A f) $ is a superfluous submodule of $ B \otimes_A P$ if either $ M$ is finitely generated (in which case $ P$ and $ B \otimes_A P$ are also finitely generated) or $J(B)$ is left T-nilpotent, see \cref{lem:2}{}. If $B \otimes_A M = 0$ then $\Ker \, ( B \otimes_A f) = B \otimes_A P$, implying that $B \otimes_A P  $ is a superfluous submodule of itself, which may happen only if $B \otimes_A P = 0$.
	\end{proof}

	\begin{rem}
		An analog of \hyperlink{lem:1.partial.preservation.of.proj.covs}{\cref{lem:1.partial.preservation.of.proj.covs}{}} can be proven in the same way for arbitrary superfluous epimorphisms, that is epimorphisms whose kernel is a superfluous submodule; see also \cite[Corollary~15.13 (Nakayama's Lemma), Lemma~28.3]{AndersonFuller}{}. The only necessary additional assumption is that $A$ is semilocal so that $\rad_A M = J(A) M$ for every module over $A$, see \cite[Corollary~15.18]{AndersonFuller}{}.
	\end{rem}

	The following definition will be employed for the characterization of radical preservation from a homological point of view. We also use the analogous terms for injective envelopes in \cref{cor:rad.pres}{}.

	\begin{defn}
		\label{defn:rad.pres}
		Let $F \colon \C \to \D $ be a covariant functor of abelian categories and let $X$ be an object in $\C $. We say that \emph{$F$ preserves the projective cover of $X$} if there is a projective cover $f \colon P \to X $ in $\C$ and either (i) morphism $F(f) $ is a projective cover of the non-zero object $ F (X) $ in $ \D $, or (ii) both $F(X)$ and $F(P)$ are zero. We say that $F$ preserves the projective cover of $X$ \emph{non-trivially} if (i) holds.
	\end{defn}

	With the above terminology in mind, the first part of \hyperlink{lem:1.partial.preservation.of.proj.covs}{\cref{lem:1.partial.preservation.of.proj.covs}{}} says that the induction functor along a radical-preserving ring homomorphism preserves the projective covers of finitely generated modules. We show next that the notion of radical preservation is in fact minimal with respect to that property.

	\begin{prop}
		\label{prop:reverse}
		Let $\phi \colon A \to B$ be a ring homomorphism. The induction functor along $ \phi $ preserves the projective covers of finitely generated $A$-modules if and only if $\phi$ is radical-preserving.
	\end{prop}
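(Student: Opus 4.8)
The forward direction is already in hand: it is exactly what the sentence preceding the statement asserts. Explicitly, if $\phi$ is radical-preserving and ${}_A M$ is finitely generated with projective cover $f\colon P \epic M$, then \cref{lem:1.partial.preservation.of.proj.covs} gives both that $M$ is $\phi$-vanishing if and only if $P$ is $\phi$-vanishing and that $B\otimes_A f$ is a projective cover of $B\otimes_A M$ when the latter is non-zero. Hence $B\otimes_A -$ preserves the projective cover of $M$: via clause (i) of \cref{defn:rad.pres} when $B\otimes_A M\neq 0$, and via clause (ii) (then $B\otimes_A P$ vanishes as well, by the first part of the lemma) otherwise. So the real content is the converse, and the plan is to prove it by testing the hypothesis on one cleverly chosen finitely generated module.

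The module I would use is $M = A/J(A)$. It is cyclic, hence finitely generated, and I claim its projective cover is the canonical surjection $\pi\colon {}_A A \epic A/J(A)$: indeed ${}_A A$ is projective, $\pi$ is onto, and $\Ker \pi = J(A)$ is a superfluous submodule of ${}_A A$ since a left ideal of $A$ is superfluous in ${}_A A$ precisely when it is contained in $J(A)$ by \cite[Theorem~15.3]{AndersonFuller} — note that this requires no semiperfectness hypothesis on $A$. As projective covers are unique up to isomorphism and the two clauses of \cref{defn:rad.pres} are isomorphism-invariant, the hypothesis applied to $X = A/J(A)$ says precisely that $B\otimes_A\pi$ witnesses one of those two clauses.

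Next I would discard the trivial alternative. Assuming $B\neq 0$ (there is nothing to prove otherwise), we have $B\otimes_A {}_A A \cong {}_B B \neq 0$, so clause (ii) of \cref{defn:rad.pres}, which requires $B\otimes_A P = 0$, is impossible; therefore $B\otimes_A\pi$ is an honest projective cover of $B\otimes_A(A/J(A))$. Identifying the maps via right exactness of $B\otimes_A-$ and the canonical isomorphism $B\otimes_A {}_A A \cong {}_B B$, the morphism $B\otimes_A\pi$ becomes the quotient map ${}_B B \epic {}_B B / B\phi(J(A))$, whose kernel is the left ideal $B\phi(J(A))$ of $B$ generated by $\phi(J(A))$. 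Since a projective cover has superfluous kernel, $B\phi(J(A))$ is superfluous in ${}_B B$, so — invoking the same characterization of superfluous left ideals once more — $B\phi(J(A)) \subseteq J(B)$; as $1\in B$, this yields $\phi(J(A)) \subseteq J(B)$, i.e.\ $\phi$ is radical-preserving.

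I do not foresee a genuine obstacle; the argument is short. The two points that deserve care are, first, checking that $A/J(A)$ admits the asserted projective cover regardless of any finiteness assumption on $A$ (it does, because the only input is the general fact about superfluous left ideals), and second, explicitly excluding the ``trivial preservation'' clause (ii) of \cref{defn:rad.pres}. The latter is precisely why one evaluates the induction functor on $A/J(A)$ rather than on an arbitrary simple module: the source ${}_A A$ of the relevant projective cover has $B\otimes_A {}_A A = {}_B B\neq 0$, which rules clause (ii) out outright and pins us down to clause (i).
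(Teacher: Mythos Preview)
Your proof is correct and follows essentially the same route as the paper: both test the hypothesis on the projective cover $A \epic A/J(A)$, identify $B\otimes_A\pi$ with the quotient $B \epic B/B\phi(J(A))$ (the paper writes $BJ(A)$, which is the same left ideal since $B$ is a right $A$-module through $\phi$), and conclude from superfluity of the kernel that $\phi(J(A)) \subseteq J(B)$. If anything, your write-up is slightly more careful --- you explicitly dispose of the case $B=0$ and of clause~(ii) of \cref{defn:rad.pres}, whereas the paper leaves these implicit.
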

	
	\begin{proof}
		One implication is \hyperlink{lem:1.partial.preservation.of.proj.covs}{\cref{lem:1.partial.preservation.of.proj.covs}{}}.
		For the converse implication, note that the natural epimorphism $f \colon A \epic A / J( A )$ is a projective cover of $ A / J( A )$ viewed as an $ A $-module. Furthermore, the following diagram is clearly commutative
		% https://q.uiver.app/#q=WzAsNCxbMCwwLCJ7fV9BIEEgXFxvdGltZXNfQiBCIl0sWzEsMCwie31fQSBBIFxcb3RpbWVzX0IgQi9KKEIpIl0sWzAsMSwie31fQSBBIl0sWzEsMSwie31fQSBBIC8gQSBcXGNkb3QgSihCKSJdLFswLDEsIjFfQSBcXG90aW1lcyBmIiwwLHsic3R5bGUiOnsiaGVhZCI6eyJuYW1lIjoiZXBpIn19fV0sWzIsMywiXFx0ZXh0cm17bmF0LlxcIGVwaW0ufSIsMCx7InNob3J0ZW4iOnsic291cmNlIjoyMH0sInN0eWxlIjp7ImhlYWQiOnsibmFtZSI6ImVwaSJ9fX1dLFswLDIsIlxcbXUiLDJdLFsxLDMsIlxcbnUiXSxbMCwyLCJcXHNpbWVxIl0sWzEsMywiXFxzaW1lcSIsMl1d
		\[\begin{tikzcd}
			{ B \otimes_A A} & { B \otimes_A  { A / J(A) } } \\
			{ B} & { B / { B \phi ( J(A) ) } }
			\arrow["{ \! \! \! B \otimes_A f}", two heads, from=1-1, to=1-2]
			\arrow[shorten <=5pt, two heads, from=2-1, to=2-2]
			\arrow["g"', from=1-1, to=2-1]
			\arrow["h", from=1-2, to=2-2]
			\arrow["\simeq", from=1-1, to=2-1]
			\arrow["\simeq"', from=1-2, to=2-2]
		\end{tikzcd}\]
		where $g( { b \otimes a } ) = { b \phi(a) } $ and $h( { b \otimes ( { a + J( A ) } ) } ) = { { b \phi(a) } + { B J( A ) } }$ for every $a \in A$ and $b \in B$, and the lower horizontal map is the natural epimorphism. Since the module $ B \otimes_A A \simeq B $ is non-zero, the homomorphism $ B \otimes_A f$ is a projective cover by assumption. In particular, it holds that $ { B \phi( J( A ) ) } $ is a superfluous submodule of ${}_B B$, implying that  $ { \phi(J( A )) } \subseteq { B \phi ( J( A ) ) } \subseteq { J( B ) } $.
	\end{proof}

	Next, we characterize the property ``the kernel of $\phi$ is superfluous'' in terms of the nonvanishing of finitely generated projective $A$-modules under the induction functor when $A$ is semiperfect.
	
	\begin{lem}
		\label{lem:2.kernel.in.radical}
		Let $\phi \colon A \to B$ be a ring homomorphism where $A$ is semiperfect. Then the kernel of $\phi$ is superfluous if and only if every finitely generated projective $A$-module is $\phi$-nonvanishing.
		If this is the case and $A$ is additionally left perfect, then all projective $A$-modules are $\phi$-nonvanishing.
	\end{lem}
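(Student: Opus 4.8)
The plan is to reduce the entire statement to the behaviour of the induction functor on the indecomposable projective $A$-modules $Ae$, where $e$ runs over the primitive idempotents of $A$. The key observation is that, restricting the canonical isomorphism $B \otimes_A A \xrightarrow{\sim} B$, $b \otimes a \mapsto b\phi(a)$, to the direct summand $B \otimes_A Ae$ arising from ${}_AA = Ae \oplus A(1-e)$, one obtains an isomorphism $B \otimes_A Ae \cong B\phi(e)$ of left $B$-modules, in which $\phi(e)$ is an idempotent of $B$. In particular, $Ae$ is $\phi$-vanishing if and only if $\phi(e) = 0$. The second ingredient is that $J(A)$ contains no non-zero idempotent, which is recorded in \cref{lem:0.idempotents}.

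For the forward implication of the equivalence, I would take a non-zero finitely generated projective module ${}_AP$. Since $A$ is semiperfect, $P \cong \bigoplus_{k=1}^{n} Ae_k$ for primitive idempotents $e_k$ and some $n \geq 1$. If $\phi(e_k) = 0$ for some $k$, then $e_k$ is a non-zero idempotent lying in $\Ker \phi \subseteq J(A)$, which is impossible; hence every $\phi(e_k)$ is non-zero and $B \otimes_A P \cong \bigoplus_{k=1}^{n} B\phi(e_k) \neq 0$. For the converse I would argue by contraposition: if $\Ker \phi$ is not superfluous, then it is an ideal not contained in $J(A)$, so by \cref{lem:0.idempotents} it contains a non-zero idempotent $e$, and then $Ae$ is a non-zero finitely generated projective $A$-module with $B \otimes_A Ae \cong B\phi(e) = 0$, i.e.\ a $\phi$-vanishing finitely generated projective.

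For the ``furthermore'', let ${}_AP$ be an arbitrary non-zero projective module over the left perfect ring $A$, which is in particular semiperfect. Since $J(A)$ is left T-nilpotent, $J(A)P$ is a superfluous submodule of $P$, so $\topp_A P = P/J(A)P$ is a non-zero semisimple $A/J(A)$-module and hence admits a simple quotient $S$. Let $p \colon Ae \epic S$ be a projective cover with $e$ primitive, and lift the composite $P \epic S$ through $p$ to a homomorphism $h \colon P \to Ae$. Then $\Image h + \Ker p = Ae$, and since $\Ker p$ is superfluous $h$ is an epimorphism, which splits because $Ae$ is projective. Thus $Ae$ is a direct summand of $P$, so $B \otimes_A P$ has the non-zero module $B\phi(e)$ as a direct summand and $P$ is $\phi$-nonvanishing.

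I expect the only genuinely delicate step to be the production of a simple quotient of a possibly non-finitely-generated projective $P$ in the last paragraph: this rests on $J(A)P$ being superfluous in $P$ (equivalently $J(A)P \neq P$), which is exactly the left T-nilpotence of $J(A)$. This is the precise point at which left perfectness, rather than mere semiperfectness, is used, and it cannot be weakened.
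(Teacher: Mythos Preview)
Your proof is correct. For the equivalence, your argument and the paper's are essentially identical: both reduce to the isomorphism $B\otimes_A Ae \cong B\phi(e)$ and then use \cref{lem:0.idempotents} for the converse direction.

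For the ``furthermore'' clause, your route differs from the paper's. The paper simply invokes the structure theorem for projectives over a left perfect ring: every projective is a (possibly infinite) direct sum $\bigoplus_i Ae_i^{(S_i)}$, and the forward argument already shows each summand $B\phi(e_i)$ is non-zero. You instead extract a single indecomposable summand $Ae$ from an arbitrary non-zero projective $P$ by hand, using left T-nilpotence to guarantee $P/J(A)P\neq 0$, producing a simple quotient, and lifting through its projective cover. Your argument is more elementary in that it avoids the full decomposition theorem and isolates precisely the consequence of left T-nilpotence that is needed (namely $J(A)P\neq P$); the paper's argument is shorter once one is willing to quote the structure theorem. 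Both are perfectly sound.
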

	
	\begin{proof}
		Assume that $A$ is semiperfect. Let $\{ e_i \}_i $ be a basic set of primitive orthogonal idempotents for $ A $, and recall that the set $\{ e_i \}_i $ is finite by definition. For every finitely generated projective module ${}_A P$ we have
		\phantomsection		\hypertarget{eqq}{}
		\begin{equation}	
			\label{eq:1.projective.modules}
			P \simeq \bigoplus_{i}  A e_i ^{(T_i)}
		\end{equation}
		where $T_i$ is a finite set and $ A e_i ^{(T_i)}$ is the direct sum of $|T_i|$ copies of $ A e_i$ for every $i$; see \cite[Characterization~27.13]{AndersonFuller}{}. It is easily verified that $ B \otimes_A A e_i \simeq  B \phi(e_i)$ through the $B$-isomorphism given by $b \otimes a e_i \mapsto b \phi (a e_i)$ with inverse $ { b \phi(e_i) } \mapsto { { b \phi (e_i) } \otimes e_i } $ for all $a \in A$ and $b \in B$. Therefore, we have
		\[
		B \otimes_A P \simeq \bigoplus_{i} ( B \otimes_A A e_i) ^{(T_i)} \simeq \bigoplus_{i}  B \phi( e_i) ^{(T_i)}
		\]
		and $P$ is $\phi$-vanishing if and only if $ e_i  \in \Ker \phi$ for all $i$ such that $T_i$ is non-empty.
		
		For any ring $A$, the radical $J(A)$ does not contain any non-zero idempotents. Therefore, the inclusion $\Ker \phi \subseteq J(A)$ implies that every finitely generated projective $A$-module is $\phi$-nonvanishing.
		On the other hand, if $\Ker \phi \nsubseteq J(A)$ then there is a non-zero idempotent $e \in \Ker \phi$ according to \hyperlink{lem:0.idempotents}{\cref{lem:0.idempotents}{}}, implying that the finitely generated projective module $ A e$ is $\phi$-vanishing.
		For the last part of the lemma, it suffices to recall that all projective $ A $-modules are direct sums as in \hyperlink{eqq}{(\ref{eq:1.projective.modules}{})} by allowing the sets $T_i$ to be infinite if $A$ is left perfect; see \cite[Proposition~28.13]{AndersonFuller}{}.
	\end{proof}

	\begin{cor}
		\label{cor:rad.pres}
		Let $\phi \colon A \to B$ be a homomorphism of Artin algebras. Then the following are equivalent.
		\begin{enumerate}[\rm(i)]
			\item The homomorphism $\phi$ is radical-preserving (with superfluous kernel).
			\item The induction functor $B \otimes_A - $ preserves projective covers (non-trivially).
			\item The induction functor $B \otimes_A - $ preserves projective covers of finitely generated modules (non-trivially).
			\item The coinduction functor $ \Hom _A ( B , - ) $ preserves injective envelopes of fi\-ni\-tely generated modules (non-trivially).
		\end{enumerate}
	\end{cor}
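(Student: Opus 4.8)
The plan is to prove the four conditions equivalent in two stages: first $\mathrm{(i)}\Leftrightarrow\mathrm{(ii)}\Leftrightarrow\mathrm{(iii)}$, which only involves the induction functor, and then $\mathrm{(iii)}\Leftrightarrow\mathrm{(iv)}$, obtained by transporting the first stage, applied to the opposite algebras, through the standard duality. Throughout I use that an Artin algebra is semiperfect and left perfect and that its Jacobson radical is nilpotent, hence left T-nilpotent.

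\emph{The equivalence $\mathrm{(i)}\Leftrightarrow\mathrm{(ii)}\Leftrightarrow\mathrm{(iii)}$.} By \cref{prop:reverse} the unparenthesised versions of $\mathrm{(i)}$ and $\mathrm{(iii)}$ are already equivalent, so the point is to match the parentheses. For $\mathrm{(iii)}\Rightarrow\mathrm{(i)}$: if $B\otimes_A-$ preserves the projective covers of finitely generated modules non-trivially, then applying this to a finitely generated projective module $P$ (for which $\mathrm{id}_P$ is a projective cover) forces $B\otimes_A P\neq 0$; hence every finitely generated projective $A$-module is $\phi$-nonvanishing, and \cref{lem:2.kernel.in.radical} (using that $A$ is semiperfect) yields $\Ker\phi\subseteq J(A)$. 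For $\mathrm{(i)}\Rightarrow\mathrm{(ii)}$: since $\Ker\phi$ is superfluous and $A$ is left perfect, \cref{lem:2.kernel.in.radical} shows that every projective $A$-module is $\phi$-nonvanishing, and then \cref{lem:1.partial.preservation.of.proj.covs} shows that every non-zero $A$-module is $\phi$-nonvanishing; moreover, as $J(B)$ is left T-nilpotent, \cref{lem:1.partial.preservation.of.proj.covs} shows that $B\otimes_A-$ preserves the projective cover of \emph{every} module, necessarily non-trivially in view of the preceding sentence. Finally $\mathrm{(ii)}\Rightarrow\mathrm{(iii)}$ is immediate.

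\emph{The equivalence $\mathrm{(iii)}\Leftrightarrow\mathrm{(iv)}$.} Let $D$ be the standard duality between $A \rmod$ and $A^{\mathrm{op}} \rmod$ (and likewise for $B$), and regard right modules as left modules over the opposite algebra. Since $A^{\mathrm{op}}$ and $B^{\mathrm{op}}$ are again Artin algebras and $J(A^{\mathrm{op}})=J(A)$, $J(B^{\mathrm{op}})=J(B)$, $\Ker\phi^{\mathrm{op}}=\Ker\phi$ as ideals, condition $\mathrm{(i)}$ holds for $\phi$ if and only if it holds for $\phi^{\mathrm{op}}\colon A^{\mathrm{op}}\to B^{\mathrm{op}}$; by the first stage applied to $\phi^{\mathrm{op}}$, the latter is equivalent to the induction functor $-\otimes_A B\colon A^{\mathrm{op}} \rmod\to B^{\mathrm{op}} \rmod$ preserving the projective covers of finitely generated modules non-trivially. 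The tensor–hom adjunction underlying the duality provides a natural isomorphism
\[
D\,\Hom_A(B,M)\;\simeq\;DM\otimes_A B\qquad(M\in A \rmod),
\]
so that $D\circ\Hom_A(B,-)\circ D\simeq-\otimes_A B$ on $A^{\mathrm{op}} \rmod$. Because $D$ is a contravariant equivalence interchanging projectives with injectives, it carries an injective envelope $M\monicc I$ of a finitely generated $A$-module to the projective cover $DI\epic DM$ over $A^{\mathrm{op}}$, and it respects the non-triviality clause since $D(X)=0$ exactly when $X=0$. Hence $\Hom_A(B,-)$ preserves the injective envelopes of finitely generated $A$-modules (non-trivially) if and only if $-\otimes_A B$ preserves the projective covers of finitely generated $A^{\mathrm{op}}$-modules (non-trivially); chaining this with the equivalences above yields $\mathrm{(iv)}\Leftrightarrow\mathrm{(i)}$.

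\emph{Expected main obstacle.} The only non-formal ingredient is the displayed natural isomorphism $D\,\Hom_A(B,-)\simeq D(-)\otimes_A B$ on finitely generated modules, together with the routine verification that $D$ transports injective envelopes of finitely generated modules to projective covers and preserves the non-trivial case. Once this is in place, the statement for $\phi^{\mathrm{op}}$ is just the instance of $\mathrm{(i)}\Leftrightarrow\mathrm{(iii)}$ already established for an arbitrary Artin algebra homomorphism, and everything else is bookkeeping with \cref{lem:1.partial.preservation.of.proj.covs}, \cref{lem:2.kernel.in.radical} and \cref{prop:reverse}.
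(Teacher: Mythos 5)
Your proof is correct and follows essentially the same route as the paper: establish $\mathrm{(i)}\Leftrightarrow\mathrm{(ii)}\Leftrightarrow\mathrm{(iii)}$ from \cref{prop:reverse}, \cref{lem:2.kernel.in.radical} and \cref{lem:1.partial.preservation.of.proj.covs} (using that an Artin algebra is left perfect with nilpotent radical), and then deduce $\mathrm{(iv)}$ by transporting the finitely generated case through the standard dualities via the natural isomorphism $D\,\Hom_A(B,-)\simeq D(-)\otimes_A B$, which is exactly the ingredient the paper cites from Auslander--Reiten--Smal\o. Your writeup simply spells out the parenthetical ``non-trivially'' bookkeeping in more detail than the paper's one-line proof.
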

	
	\begin{proof}
		As Artin algebras are perfect rings, the equivalence between (i) and (ii) (or (iii)) is a direct consequence of \hyperlink{lem:1.partial.preservation.of.proj.covs}{\cref{lem:1.partial.preservation.of.proj.covs}{}}{} for the preservation part and \cref{lem:2.kernel.in.radical}{} for the non-triviality part.
		The equivalence between (iii) and (iv) is due to the fact that the restriction of the coinduction functor $ \Hom _A ( B , - ) $ on finitely generated modules is naturally isomorphic to the dual induction functor $ { - \otimes_A B } \colon A^{\mathrm{op}} \rmod  \to B^{\mathrm{op}} \rmod $ up to the standard dualities between left and right finitely generated modules over $A$ and $B$; see for instance \cite[Theorem~3.1.(b), Theorem~3.3]{repth}{}.
		%	The corollary is a direct consequence of \cref{prop:reverse}{}, \cref{lem:2.kernel.in.radical}{} and the fact that the restriction of the coinduction functor $ \Hom _A ( B , - ) $ on finitely generated modules is naturally isomorphic to the dual induction functor $ { - \otimes_A B } \colon A^{\mathrm{op}} \rmod  \to B^{\mathrm{op}} \rmod $ up to the standard dualities between left and right finitely generated modules over $A$ and $B$. See for instance \cite[Theorem~3.1.(b), Theorem~3.3]{repth}{} for the last claim.
	\end{proof}

	Next, we establish the key property of radical-preserving homomorphisms with superfluous kernel.
	A module $ {}_A M$ is called \emph{$\phi$-flat} if $\Tor^A_i(B , M) = 0$ for all $i \geq 1$.

	\begin{prop}[cf.\!\mbox{\cite[Lemmas~1~and~1']{Small}{}}]		\phantomsection		\hypertarget{prop:radical-preserving.implies.preservation.of.proj.covs}{}
		\label{prop:radical-preserving.implies.preservation.of.proj.covs}
		Let $\phi \colon A \to B $ be a radical-pre\-ser\-ving ring homomorphism with superfluous kernel where $A$ is semiperfect.
		\begin{enumerate}[\rm(i)]
			\item The induction functor along $ \phi $ preserves non-trivially the projective covers of finitely generated $A$-modules. Furthermore, we have $\pd {}_A M = \pd {}_B B \otimes_A M$ for every finitely generated $\phi$-flat module $ M $ if $A$ is in addition left noetherian.
			\item If $A$ is left perfect and $J(B)$ is a left T-nilpotent ideal of $B$, then the induction functor along $ \phi $ preserves non-trivially all projective covers. Moreover, it holds that $\pd {}_A M = \pd {}_B B \otimes_A M$ for every $\phi$-flat module $ M $.
		\end{enumerate}
	\end{prop}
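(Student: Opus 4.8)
The goal is to show that the induction functor $B \otimes_A -$ preserves projective covers non-trivially (under the respective hypotheses), and that $\pd {}_A M = \pd {}_B B \otimes_A M$ for $\phi$-flat modules. The first assertion in each of (i) and (ii) is essentially already in hand: \cref{lem:1.partial.preservation.of.proj.covs}{} tells us that for a non-zero $M$ with projective cover $f \colon P \epic M$ — in case (i) with $M$ finitely generated, in case (ii) with $J(B)$ left T-nilpotent — the morphism $B \otimes_A f$ is a projective cover of $B \otimes_A M$ whenever the latter is non-zero. What remains for the ``non-trivially'' part is to rule out the degenerate case $B \otimes_A M = 0$. For that I would invoke \cref{lem:2.kernel.in.radical}{}: since $\Ker \phi$ is superfluous and $A$ is semiperfect (resp.\ left perfect), every finitely generated (resp.\ arbitrary) projective $A$-module is $\phi$-nonvanishing, so $B \otimes_A P \neq 0$; then \cref{lem:1.partial.preservation.of.proj.covs}{} forces $B \otimes_A M \neq 0$ as well. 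This disposes of the ``preserves non-trivially'' claims in both parts.

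\textbf{The dimension equality.} For the equality $\pd {}_A M = \pd {}_B B \otimes_A M$ I would take a minimal projective resolution of ${}_A M$,
\[
\cdots \lxr P_2 \lxr P_1 \lxr P_0 \epic M \lxr 0,
\]
which exists because $A$ is semiperfect; minimality means each syzygy sits inside the radical of the preceding term, i.e.\ each differential $P_{n} \epic \Image(P_n \to P_{n-1})$ and each $P_0 \epic M$ is a projective cover. Since $M$ is $\phi$-flat, $\Tor^A_i(B,M) = 0$ for $i \geq 1$, so applying $B \otimes_A -$ yields an exact complex
\[
\cdots \lxr B \otimes_A P_2 \lxr B \otimes_A P_1 \lxr B \otimes_A P_0 \epic B \otimes_A M \lxr 0,
\]
a projective resolution of ${}_B (B \otimes_A M)$. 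The crux is that this resolution is again minimal. For this I would argue syzygy by syzygy: writing $\Omega^n M$ for the $n$-th syzygy, $\phi$-flatness of $M$ gives $\phi$-flatness of every $\Omega^n M$ (dimension shift on $\Tor$), and the short exact sequence $0 \to \Omega^{n+1}M \to P_n \to \Omega^n M \to 0$ stays exact after tensoring; so $B \otimes_A \Omega^n M$ has projective cover $B \otimes_A P_n$ by the first part of the proposition applied to $\Omega^n M$ (which is finitely generated in case (i), arbitrary in case (ii)) — provided $\Omega^n M \neq 0$. Minimality of the $A$-resolution means $\Omega^n M = 0$ for $n > \pd {}_A M$ and $\Omega^n M \neq 0$ for $n \le \pd {}_A M$; combining, the $B$-resolution has $B \otimes_A P_n \neq 0$ precisely for $n \le \pd {}_A M$ and is a minimal resolution, whence $\pd {}_B B \otimes_A M$ equals $\pd {}_A M$. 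In case (i) the extra left noetherian hypothesis is exactly what guarantees that the terms $P_n$ of the minimal resolution can be taken finitely generated, so that the first part of the proposition applies to each $\Omega^n M$; in case (ii) left T-nilpotence of $J(B)$ is what makes $B \otimes_A f$ a projective cover for non-finitely-generated $M$ via \cref{lem:1.partial.preservation.of.proj.covs}{}.

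\textbf{Main obstacle.} The genuinely delicate point is the preservation of \emph{minimality} of resolutions under $B \otimes_A -$, i.e.\ controlling that no superfluous-kernel information is lost and no spurious split summands appear. This is handled cleanly by reducing to the one-step statement (projective covers are preserved) already established, applied at each syzygy — but one must be careful that the syzygies remain $\phi$-flat and non-zero in the relevant range, and in case (i) that they remain finitely generated, which is where the noetherian assumption enters. A secondary subtlety is the boundary case where $\pd {}_A M$ is infinite: then $\Omega^n M \neq 0$ for all $n$, the $B$-resolution never terminates, and one should note that $\pd {}_B B \otimes_A M = \infty$ too — which follows since a finite-length projective resolution of $B \otimes_A M$ would, by comparison with the minimal one constructed above, force the minimal $B$-resolution to terminate, contradicting $B \otimes_A P_n \neq 0$ for all $n$.
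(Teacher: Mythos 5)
Your proposal is correct and follows essentially the same route as the paper. Both proofs reduce the projective-cover-preservation claims to \cref{lem:1.partial.preservation.of.proj.covs}{} and \cref{lem:2.kernel.in.radical}{}, and both establish the dimension equality by taking a minimal projective resolution $\mathbb{P}_M$ of ${}_A M$ (with finitely generated terms in case (i), thanks to noetherianity), observing that $\phi$-flatness of $M$ makes $B \otimes_A \mathbb{P}_M$ exact, and then invoking the projective-cover-preservation part at each syzygy to conclude that $B \otimes_A \mathbb{P}_M$ is again minimal of the same length. Your write-up is slightly more explicit in a couple of places — you spell out the dimension-shift argument showing each $\Omega^n M$ stays $\phi$-flat, and you handle the $\pd {}_A M = \infty$ boundary case separately — whereas the paper compresses these into "the complex $B \otimes_A \mathbb{P}_M$ is exact and, thus, a minimal projective resolution" and "the complexes have the same length"; but these are presentational differences, not mathematical ones.
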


	\begin{proof}
		The first parts of (i) and (ii) follow directly from \hyperlink{lem:1.partial.preservation.of.proj.covs}{\cref{lem:1.partial.preservation.of.proj.covs}{}} and \cref{lem:2.kernel.in.radical}{}. If $A$ is semiperfect and left noetherian, then every finitely generated module ${}_A M$ has a minimal projective resolution, say $\mathbb{P}$, where all projective modules are finitely generated. If ${}_A M$ is also $\phi$-flat, then the complex $ B \otimes_A \mathbb{P}$ is exact and, thus, a minimal projective resolution of $ B \otimes_A M$ according to the first part of (i). Similarly, if $A$ is left perfect and $J(B)$ is left T-nilpotent, then every module ${}_A M$ has a minimal projective resolution $\mathbb{P}$, and the complex $ B \otimes_A \mathbb{P}$ is a minimal projective resolution of $ B \otimes_A M$ if ${}_A M$ is $\phi$-flat. In both cases, the desired equality holds as the complexes $ B \otimes_A \mathbb{P} $ and $ \mathbb{P} $ have the same length.
	\end{proof}

	We are now ready to prove the main theorem of this section.

	\begin{thm}		\hypertarget{thm:main}{}
		\label{thm:main}
		Let $\phi \colon A \to B $ be a radical-preserving ring homomorphism with superfluous kernel.
		\begin{enumerate}[\rm(i)]
			\item If $A$ is left noetherian semiperfect, then
			\[
			\fpd A \leq \fpd B + \fd B_A.
			\]
			\item If $A$ is left perfect and $J(B)$ is a left T-nilpotent ideal of $B$, then
			\[
			\Fpd A \leq \Fpd B + \fd B_A.
			\]
		\end{enumerate}
		In both cases, it holds that $\gd A \leq \gd B + \fd B_A$.
	\end{thm}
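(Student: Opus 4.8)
The plan is to reduce all three inequalities to \cref{prop:radical-preserving.implies.preservation.of.proj.covs}{} by shifting $d := \fd B_A$ steps along a projective resolution; if $d = \infty$ there is nothing to prove, so assume $d < \infty$. The auxiliary observation I would record first is this: given any $A$-module $M$ with a fixed projective resolution, let $\Omega^d M$ denote its $d$-th syzygy. Since every term of the resolution is flat, dimension shifting gives $\Tor^A_i(B, \Omega^d M) \simeq \Tor^A_{i+d}(B, M)$ for all $i \geq 1$, and the right-hand side vanishes because $\fd B_A = d$; hence $\Omega^d M$ is $\phi$-flat. From the short exact sequences linking consecutive syzygies one also gets $\pd {}_A M \leq d + \pd {}_A \Omega^d M$. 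Finally, if $A$ is left noetherian and $M$ is finitely generated, then $\Omega^d M$ is finitely generated, and therefore so is the $B$-module $B \otimes_A \Omega^d M$.

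Granting this, (i) follows immediately: for $M \in A \rmod$ with $\pd {}_A M < \infty$ we have $\pd {}_A \Omega^d M < \infty$, and \cref{prop:radical-preserving.implies.preservation.of.proj.covs}{}(i) yields $\pd {}_A \Omega^d M = \pd {}_B B \otimes_A \Omega^d M \leq \fpd B$, the last step because $B \otimes_A \Omega^d M$ is a finitely generated $B$-module of finite projective dimension. Hence $\pd {}_A M \leq \fpd B + d$, and taking the supremum over all such $M$ gives $\fpd A \leq \fpd B + \fd B_A$. Part (ii) is proved in exactly the same way, letting $M$ range over all $A$-modules of finite projective dimension and invoking \cref{prop:radical-preserving.implies.preservation.of.proj.covs}{}(ii) instead of (i); this is legitimate precisely because $A$ is left perfect and $J(B)$ is left T-nilpotent.

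For the global dimension bound I would run the identical argument, now using that $\pd {}_B B \otimes_A \Omega^d M \leq \gd B$ holds with no finiteness hypothesis (as an inequality of extended reals), so that $\pd {}_A \Omega^d M = \pd {}_B B \otimes_A \Omega^d M \leq \gd B$ and hence $\pd {}_A M \leq \gd B + d$. In the setting of (i) one restricts $M$ to finitely generated modules and uses that the left global dimension of any ring equals the supremum of the projective dimensions of its finitely generated left modules; in the setting of (ii) one simply takes arbitrary $M$. The genuine homological content is already packaged inside \cref{prop:radical-preserving.implies.preservation.of.proj.covs}{}, so I expect the only real point needing care to be the bookkeeping: checking that $\Omega^d M$ is actually $\phi$-flat — which is exactly where the finiteness of $\fd B_A$ enters — and, in the little finitistic case, that finite generation survives both forming the syzygy and applying $B \otimes_A -$.
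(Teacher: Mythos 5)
Your proposal is correct and takes essentially the same route as the paper: shift $d = \fd B_A$ steps to obtain a $\phi$-flat syzygy $\Omega^d M$, then invoke \cref{prop:radical-preserving.implies.preservation.of.proj.covs}{} to translate its $A$-projective dimension into a $B$-projective dimension. The only (cosmetic) differences are that you rely on the one-sided inequality $\pd_A M \leq d + \pd_A \Omega^d_A(M)$ rather than the equality coming from a minimal resolution, which saves the paper's case split on $\pd_A M > d$; and you should make explicit that in part (i) the chosen resolution must consist of finitely generated projectives (for instance a minimal one, which exists since $A$ is semiperfect and is automatically finitely generated termwise since $A$ is left noetherian), as otherwise $\Omega^d_A(M)$ need not be finitely generated.
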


	\begin{proof}
		We assume that $\fd B_A = d $ is finite, as there is nothing to prove otherwise. Let $M$ be an $A$-module with minimal projective resolution
		\[
			\cdots \to P_n \xrightarrow{f_n} \cdots \to P_1 \xrightarrow{f_1} P_0 \xrightarrow{f_0} M \to 0
		\]
		and let $\Omega^i_A(M)$ denote the kernel of $f_{i-1}$ for all $i \geq 0$, where $f_{-1} : M \to 0 $ is the trivial homomorphism. We have $\Tor^A_i (B, \Omega_A^d (M)) \simeq \Tor^A_{d+i} (B, M) \simeq 0$ for any $i \geq 1$, where the first isomorphism is commonly called `dimension shift', and the second Tor-group is trivial as it can be calculated from a flat resolution of $B_A$. In other words, the $ d $-th syzygy of any $ A $-module is $ \phi $-flat.

		We assume now that $A$ is left noetherian semiperfect and take a finitely generated module ${}_A M$. All modules occurring in the minimal projective resolution of $ M$, including the kernels, are finitely generated. In particular, the module $ \Omega_A^d (M)$ is finitely generated and $\phi$-flat, implying that
		\[
		\pd _A M = d + \pd _A \Omega_A^d(M) = d + \pd _B B \otimes_A \Omega_A^d(M) \leq d + \gd B
		\]
		if $\pd _A M > d$. Indeed, tensoring with $ B_A $ preserves the projective dimension of finitely generated $ \phi $-flat modules according to \hyperlink{prop:radical-preserving.implies.preservation.of.proj.covs}{\cref{prop:radical-preserving.implies.preservation.of.proj.covs}{}.(i)}{}. Since $ M $ was an arbitrary finitely generated $ A $-module and the global dimension of a ring may be computed on finitely generated modules (see \cite[Part~III, Section~14]{Kaplansky}{}), taking the supremum yields $ \gd A \leq \gd B + d $.
		
		Restricting our attention to modules $ {}_A M $ as above with $\pd _A M < \infty $, we get
		\[
		\pd _A M \leq d + \fpd B
		\]
		as $B \otimes_A \Omega_A^d(M)$ is a finitely generated $B$-module of finite projective dimension (equal to $\pd _A M - d$) in this case. Indeed, it holds that $ \pd _B ( B \otimes_A \Omega_A^d(M) ) \leq \fpd B $ in this case by the definition of the little finitistic dimension, and taking the supremum yields $ \fpd A \leq \fpd B + d $.
		
		The proof of part (ii) is analogous, as all $ A $-modules have minimal projective resolutions when $ A $ is left perfect and in view of \hyperlink{prop:radical-preserving.implies.preservation.of.proj.covs}{\cref{prop:radical-preserving.implies.preservation.of.proj.covs}{}.(ii)}{}.
	\end{proof}

	The following corollary is a direct consequence of \hyperlink{thm:main}{\cref{thm:main}{}}{}, as a ring is left noetherian left perfect if and only if it is left artinian; see \cref{lem:1}{}.

	\begin{cor}
			\label{cor:1}
		Let $\phi \colon A \to B $ be a radical-preserving homomorphism of left artinian rings with superfluous kernel. Then
			\[
			\fpd A \leq \fpd B + \pd B_A
			\]
		and the inequality remains valid if we replace $ \fpd $ with $ \Fpd $ or $ \gd $.
	\end{cor}

	As an Artin algebra is always left artinian as a ring (see for instance \cite[Section~II.1]{repth}{}), we obtain the following corollary.

	\begin{cor}
		\label{corartinalgs}
		A radical-preserving homomorphism of Artin algebras of finite projective dimension with superfluous kernel reflects the finiteness of the little finitistic, big finitistic and global dimension.
		%, then the finiteness of $ \fpd B $ implies the finiteness of $ \fpd A $. Moreover, the analogous implications hold for the big finitistic and global dimensions of the algebras.
	\end{cor}
	
	The condition of radical preservation is automatically satisfied in many cases, as shown in the next proposition. In particular, every Artin algebra homomorphism is radical-preserving if the target algebra is basic.

	\begin{prop}
		\label{propbasic}
		A ring homomorphism $\phi \colon A \to B $ is radical-preserving in the following cases:
		\begin{enumerate}[\rm(i)]
			\item The ring homomorphism $ \phi $ is surjective.
			\item The Jacobson radical of $A$ is nil and $B$ is basic semiperfect.
		\end{enumerate}
	\end{prop}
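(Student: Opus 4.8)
The plan is to treat the two cases separately, each being a short application of standard facts about the Jacobson radical together with \cref{lem:0.basic.semiperfect.rings}.

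For case (i), I would start from a description of the radical that is manifestly stable under surjective homomorphisms. Recall that for any ring $R$ one has $a \in J(R)$ if and only if $1_R - ra$ is left invertible for every $r \in R$, and that the elements with this property form the largest two-sided ideal $I$ with $1_R + I \subseteq R^\times$ (see \cite{AndersonFuller}). Given $a \in J(A)$ and an arbitrary $b \in B$, surjectivity of $\phi$ lets me write $b = \phi(r)$, and then $1_B - b\,\phi(a) = \phi(1_A - ra)$ is the image of a unit of $A$ under a ring homomorphism, hence a unit of $B$; since $b$ was arbitrary, $\phi(a) \in J(B)$. Equivalently, identifying $B$ with $A/\Ker\phi$, this is just the textbook inclusion $(J(A)+\Ker\phi)/\Ker\phi \subseteq J(A/\Ker\phi)$.

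For case (ii), I would exploit that $J(A)$ is nil. For $a \in J(A)$ there is $n \geq 1$ with $a^n = 0$, so $\phi(a)^n = \phi(a^n) = 0$, and hence $\phi(a) \in N(B)$; this shows $\phi(J(A)) \subseteq N(B)$. Since $B$ is basic and semiperfect, the first assertion of \cref{lem:0.basic.semiperfect.rings} gives $N(B) \subseteq J(B)$, and combining the two inclusions yields $\phi(J(A)) \subseteq J(B)$, as required.

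I do not expect a genuine obstacle here: the only point requiring care in (i) is to use a characterization of $J$ that behaves well under passage to images (units, or quotients) rather than the ``intersection of maximal left ideals'' description, which does not; and in (ii) the substantive content has already been isolated in \cref{lem:0.basic.semiperfect.rings}. Finally, the remark preceding the proposition — that every homomorphism of Artin algebras with basic target, and in particular every homomorphism of bound quiver algebras, is radical-preserving — drops out of (ii), since an Artin algebra is semiperfect and has nilpotent, hence nil, Jacobson radical.
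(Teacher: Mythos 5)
Your proposal is correct and follows essentially the same route as the paper: part (ii) is exactly the paper's argument (nilpotence is preserved by ring homomorphisms, then apply \cref{lem:0.basic.semiperfect.rings}), and for part (i) the paper simply cites \cite[Corollary~15.8]{AndersonFuller}, whereas you spell out the standard unit-characterization argument behind that reference.
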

	
	\begin{proof}
		For (i), see \cite[Corollary~15.8]{AndersonFuller}{}. The second part follows from \cref{lem:0.basic.semiperfect.rings}{}, which implies that nilpotent elements of $ B $ are contained in its Jacobson radical, and the fact that a ring homomorphism preserves nilpotency of elements.
	\end{proof}
	
	As a consequence of \hyperlink{thm:main}{\cref{thm:main}{}}{}, \cref{propbasic}{} and the fact that every Artin algebra is Morita equivalent to a basic Artin algebra, we obtain the following equivalent reformulation of the Finitistic Dimension Conjecture $\mathsf{(FDC)}$ for Artin algebras. Note that the reformulation remains valid if Artin algebras are replaced by finite dimensional algebras over a field.
	
	\begin{cor}
		\label{restateFDC}
		The $\mathsf{(FDC)}$ holds for Artin algebras if and only if for every basic Artin algebra $ A $ there exist
		\begin{enumerate}[\rm(i)]
			\item a basic Artin algebra $ B $ with $  \fpd B  < \infty$, and
			\item a homomorphism $\phi \colon A \to B $ with superfluous kernel of finite projective dimension.
		\end{enumerate}
	\end{cor}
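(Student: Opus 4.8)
The plan is to prove the two implications separately; the forward one is essentially trivial, while the reverse one repackages \cref{thm:main}{} together with the Morita invariance of the little finitistic dimension.

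For the ``only if'' direction, assume the $\mathsf{(FDC)}$ holds for Artin algebras and let $A$ be a basic Artin algebra. I would simply take $B = A$ and $\phi = \id_A$. The identity homomorphism is radical-preserving and has zero (hence superfluous) kernel, and it is homological since $B_A = A_A$ is free, so that $\fd B_A = 0$; by hypothesis $\fpd B = \fpd A < \infty$. Thus conditions (i) and (ii) are met.

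For the ``if'' direction, suppose that every basic Artin algebra admits such data $(B,\phi)$, and let $\Lambda$ be an arbitrary Artin algebra. First I would replace $\Lambda$ by a basic Artin algebra $A$ Morita equivalent to it; since a Morita equivalence preserves projective modules and projective dimensions, it preserves the little finitistic dimension, so it suffices to show $\fpd A < \infty$. By hypothesis there is a basic Artin algebra $B$ with $\fpd B < \infty$ and a homological homomorphism $\phi \colon A \to B$ with superfluous kernel. The crucial point is that $\phi$ is automatically radical-preserving: the Jacobson radical of the Artin algebra $A$ is nilpotent, hence nil, and $B$ is basic semiperfect, so \cref{propbasic}{}(ii) applies. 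Now \cref{corartinalgs}{} (the specialization of \cref{thm:main}{} to Artin algebras, which are left perfect and left noetherian) yields that the finiteness of $\fpd B$ forces the finiteness of $\fpd A$. Hence $\fpd \Lambda = \fpd A < \infty$, and since $\Lambda$ was arbitrary the $\mathsf{(FDC)}$ holds.

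The only step requiring genuine care is the verification that $\phi$ is radical-preserving in the ``if'' direction, since \cref{corartinalgs}{} and \cref{thm:main}{} apply only to radical-preserving homomorphisms; this is exactly where the basicness of $B$ enters, via \cref{propbasic}{}. For the parenthetical remark in the statement, the entire argument carries over verbatim with ``Artin algebra'' replaced by ``finite dimensional algebra over a field'', because such an algebra is an Artin algebra and its basic Morita-equivalent companion is again finite dimensional over the same field.
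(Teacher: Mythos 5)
Your proof is correct and matches the paper's intended argument exactly: the forward direction via $B=A$ and $\phi=\id_A$, and the reverse direction combining Morita invariance of $\fpd$, the automatic radical-preservation from \cref{propbasic}{}(ii) because $B$ is basic, and the reduction result \cref{corartinalgs}{} (the Artin-algebra specialization of \cref{thm:main}{}). The paper states this corollary as an immediate consequence of precisely those three ingredients without writing out the details, so there is nothing further to compare.
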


	We close this section by applying \hyperlink{thm:main}{\cref{thm:main}{}}{} in order to (partly) recover and extend results from the literature.
	We begin with a classic result due to Small and Kirkman-Kuzmanovich-Small, restricted to the class of left artinian rings.
	
	\begin{cor}[cf.\!\mbox{\cite[Theorem~1]{Small}{}},\!\mbox{\cite[Theorem~1.8]{KKS}{}}]
		For a left artinian ring $ A $ and a superfluous ideal $ K $, it holds that
		\[
		\fpd A \, \leq \, \fpd { A / K } + \pd {A / K}_A .
		\]
		Moreover, the analogous inequalities hold for the big finitistic and global dimensions of the rings.
	\end{cor}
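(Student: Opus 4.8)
The plan is to apply \cref{thm:main} to the canonical surjection $\pi \colon A \epic A/K$. First I would record the trivial but necessary observation that $A/K$ is again an Artin algebra, being a quotient of one; consequently all the ring-theoretic hypotheses appearing in \cref{thm:main} are automatic for both algebras. Indeed, every Artin algebra is left noetherian and semiperfect, is left (and right) perfect, and has nilpotent Jacobson radical; in particular $J(A/K)$ is a nilpotent, hence left T-nilpotent, ideal of $A/K$. So the ``base'' side satisfies the hypothesis of \cref{thm:main}(i) (left noetherian semiperfect) and also that of \cref{thm:main}(ii) (left perfect), while the ``target'' side $A/K$ has left T-nilpotent Jacobson radical as required by part (ii).

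Next I would verify that $\pi$ itself meets the two standing assumptions of \cref{thm:main}. It is surjective, so it is radical-preserving by \cref{propbasic}(i). Its kernel is exactly $K$, which is superfluous by hypothesis (equivalently, $K \subseteq J(A)$). Hence $\pi$ is a radical-preserving homomorphism of Artin algebras with superfluous kernel, and \cref{thm:main} applies verbatim.

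Finally I would read off the conclusions: part (i) of \cref{thm:main} gives $\fpd A \leq \fpd{A/K} + \fd{(A/K)}_A$, part (ii) gives $\Fpd A \leq \Fpd{A/K} + \fd{(A/K)}_A$, and the closing assertion of \cref{thm:main} gives $\gd A \leq \gd{A/K} + \fd{(A/K)}_A$ — where $\fd{(A/K)}_A$ denotes the flat dimension of $A/K$ as a right $A$-module via restriction of scalars along $\pi$ — which is precisely the statement. There is no genuine obstacle here; the corollary is an immediate specialization of \cref{thm:main}, and the only points that need to be checked are the elementary facts, noted above, that a quotient of an Artin algebra is an Artin algebra and that a surjective homomorphism is radical-preserving.
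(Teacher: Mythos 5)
Your proposal is correct and follows exactly the same route as the paper, which simply states that the corollary is a direct application of \cref{thm:main}{} to the canonical surjection. You have merely unpacked the routine verifications (Artin algebras are noetherian, semiperfect and perfect; surjections are radical-preserving; the kernel is $K$) that the paper leaves implicit.
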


	\begin{proof}
		Direct application of \hyperlink{thm:main}{\cref{thm:main}{}}{}.
	\end{proof}
	
	The crucial result proven in \cite{Small}{} is the equality of projective dimensions $\pd _A M = \pd _B B \otimes_A M$ for every $\phi$-flat module ${}_A M$, where $\phi \colon A \epic B$ is a surjective ring homomorphism such that its kernel is a nilpotent ideal of $A$. The crux of the proof is to show that $M$ is $A$-free if the module $B \otimes_A M$ is $B$-free. More specifically, it was shown that if a subset $ T $ of the module $  B \otimes_A M $ is a free $B$-basis, then any preimage of $ T $ under the natural $A$-epimorphism $ \eta_M \colon M \epic {}_A B \otimes_A M $, sending every $m \in M $ to $ 1_B \otimes m $, is a free $A$-basis of $M$.
	Similarly, the same equality of projective dimensions was proven for finitely generated $\phi $-flat modules under the extra assumption that $A$ is left noetherian, whereas the kernel of $\phi $ is allowed to be just superfluous in that case.

	When $\phi$ is no longer surjective, the natural map $M \to {{}_A B \otimes_A M } $ fails to be surjective in general and, therefore, the method of \cite{Small}{} cannot be employed in order to prove the above equality of projective dimensions for $\phi$-flat modules. \cref{prop:radical-preserving.implies.preservation.of.proj.covs}{} shows that this obstacle can be overcome when $\phi$ is radical-preserving under natural extra assumptions on the rings.

	The next result concerns a class of radical-preserving monomorphisms of Artin algebras studied in \cite{FDCandrelprojmods}{}.

	\begin{cor}[\mbox{\!\!\cite[Corollary~1.4]{FDCandrelprojmods}{}}]		\label{cor:rad.pres.1}		\hypertarget{cor:rad.pres.1}
		Let $ \phi \colon A \monicc B$ be a monomorphism of Artin algebras such that $ \phi ( J(A) )$ is a left ideal of $B$ and the projective dimension of $B$ as a right $A$-module is finite. Then $\fpd A \leq \fpd B + \pd B_A + 2$.
	\end{cor}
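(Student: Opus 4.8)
The plan is to recognize $i\colon A\monicc B$ as a homological radical-preserving homomorphism with superfluous kernel and then invoke \hyperlink{thmA}{Theorem~A} (i.e.\ \cref{thm:main}), whose hypotheses are automatic here because $A$ and $B$ are Artin algebras. So the work reduces to checking three things: that $i$ is radical-preserving, that $\Ker i$ is superfluous, and that $i$ is (left) homological.

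The only point with any content is radical preservation. Since $A$ is an Artin algebra, $J(A)$ is a nilpotent ideal, say $J(A)^{n}=0$. As $i$ is a unital ring homomorphism, $i(J(A))^{k}=i(J(A)^{k})$ for every $k\geq 1$, hence $i(J(A))^{n}=i(0)=0$; that is, $i(J(A))$ is a nilpotent subset of $B$. By hypothesis $i(J(A))$ is a left ideal of $B$, so it is a nilpotent left ideal, and every nil one-sided ideal of a ring is contained in its Jacobson radical (see \cite[\S15]{AndersonFuller}). Therefore $i(J(A))\subseteq J(B)$, so $i$ is radical-preserving. The remaining two checks are immediate: $i$ is a monomorphism, so $\Ker i=0\subseteq J(A)$ is superfluous; and $\fd B_A\leq\pd B_A<\infty$ by hypothesis, so $i$ is homological. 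I expect this first step to be the ``hard'' part only in the sense that it is where the left-ideal hypothesis on $i(J(A))$ is genuinely used (a merely nil \emph{subring} need not lie in $J(B)$); everything else is formal.

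Finally I would assemble the bound. Since $A$ and $B$ are Artin algebras, $A$ is left noetherian and (left) perfect, and $J(B)$ is nilpotent, hence left T-nilpotent; thus all hypotheses of \cref{thm:main}, parts (i) and (ii), are satisfied. Applying it yields
\[
\fpd A \leq \fpd B + \fd B_A \leq \fpd B + \pd B_A \leq \fpd B + \pd B_A + 2 ,
\]
which is the claimed inequality, and likewise $\Fpd A\leq\Fpd B+\pd B_A$ and $\gd A\leq\gd B+\pd B_A$. In particular this recovers \cite[Corollary~1.4]{FDCandrelprojmods} while improving the additive constant, and the argument makes transparent that the ``$+2$'' there is not needed once projective dimensions are computed via minimal projective resolutions, as in \cref{prop:radical-preserving.implies.preservation.of.proj.covs}.
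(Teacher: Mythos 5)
Your proposal is correct and follows essentially the same route as the paper: show $i(J(A))$ is a nilpotent left ideal of $B$ (using nilpotence of $J(A)$ and multiplicativity of $i$), invoke the fact that nilpotent one-sided ideals lie in the Jacobson radical (Anderson--Fuller), observe $\Ker i = 0$ and $\fd B_A \leq \pd B_A < \infty$, and apply \cref{thm:main}. Your closing remark about the sharper bound $\fpd B + \pd B_A$ matches the comment the paper makes just before stating the corollary.
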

	
	\begin{proof}
		It holds that $ \phi $ is radical-preserving as $ \phi ( J(A) ) $ is a nilpotent left ideal of $B$, see  \cite[Corollary~15.10]{AndersonFuller}{}. The result follows now from \hyperlink{thm:main}{\cref{thm:main}{}}{} and the fact that $B$ is a perfect ring.
	\end{proof}

	It should be noted that \hyperlink{thm:main}{\cref{thm:main}{}}{} yields in fact the smaller upper bound $ \fpd B + \pd B_A $ for $ \fpd A$ in the setup of \hyperlink{cor:rad.pres.1}{\cref{cor:rad.pres.1}{}}, as well as the analogous upper bounds for $ \Fpd A $ and $ \gd A $.

	More recently, it was proven in \cite{bifiniteexts}{} that if $ \phi \colon A \monicc B $ is a monomorphism of finite dimensional algebras such that the quotient $B / A$ has finite projective dimension as an $A$-bimodule, that is as a module over the enveloping algebra of $A$, then the finiteness of $\fpd B $ implies the finiteness of $\fpd A $. Under those conditions, the quotient $B / A$ has in particular finite projective dimension as a right $A$-module and the same holds for $B$ due to the short exact sequence $0 \to A \to B \to B/A \to 0$ of right $A$-modules. If in addition $ B $ is basic, then it follows from \cref{propbasic}{} that $ \phi $ is radical-preserving. In particular, we may recover the above implication through \hyperlink{thm:main}{\cref{thm:main}{}}{} and also extend it for the other two dimensions of $A$ and $B$.

	The last result concerns the (left) global dimension of left perfect and left noetherian semilocal rings.

	\begin{cor}[cf.\!\mbox{\cite[Corollary~11]{Ausgldim}{}}]
		\label{corgldim1}
		If $R$ is a left perfect or left noetherian semilocal ring, then its weak global dimension is attained among the flat dimensions of simple right modules. In particular, it holds that
		\[
		\gd R = \max \{ \fd S_R \ | \ S_R \textrm{ simple}\} .
		\]
	\end{cor}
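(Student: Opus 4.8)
The plan is to apply \cref{thm:main} to the canonical projection $\pi\colon R\to R/J(R)$ and to read off both assertions from a short chain of inequalities. First I would record the estimates that cost nothing. Since $R$ is semilocal, $R/J(R)$ is a semisimple Artinian ring, so as a right $R$-module it is a finite direct sum of copies of the simple right $R$-modules; here one uses that a simple right $R$-module is the same thing as a simple right $R/J(R)$-module, because $J(R)$ annihilates it, and conversely. Hence $\fd(R/J(R))_R=\max\{\fd S_R\mid S_R\text{ simple}\}$, the maximum ranging over the finitely many isomorphism classes. On the other hand, for every right $R$-module $N$ one has $\fd N_R\leq\wgd R\leq\gd R$: the first inequality is the definition of the (left--right symmetric) weak global dimension, and the second holds because flat dimension never exceeds projective dimension. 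Specializing $N$ to the simple right modules yields $\max\{\fd S_R\}\leq\wgd R\leq\gd R$.

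It then remains to prove the opposite estimate $\gd R\leq\fd(R/J(R))_R$. The map $\pi$ is surjective, hence radical-preserving by \cref{propbasic}\,(i), and its kernel $J(R)$ is superfluous; moreover $\gd(R/J(R))=0$ since $R/J(R)$ is semisimple. If $R$ is left perfect, then $J(R/J(R))=0$ is trivially left T-nilpotent, so the global-dimension clause of \cref{thm:main}\,(ii) gives $\gd R\leq\gd(R/J(R))+\fd(R/J(R))_R=\fd(R/J(R))_R$. If instead $R$ is left noetherian and semilocal, the same bound follows because $\pi$ is in addition a ring epimorphism with superfluous kernel, so the classical reduction along surjections applies (\cite[Theorem~1]{Small}, \cite[Theorem~1.8]{KKS}), and that reduction needs no semiperfectness hypothesis.

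Combining the two halves gives $\max\{\fd S_R\}\leq\wgd R\leq\gd R\leq\fd(R/J(R))_R=\max\{\fd S_R\}$, so all four quantities coincide. In particular $\wgd R=\max\{\fd S_R\}$, which says precisely that the weak global dimension is attained at a simple right $R$-module (the maximum is realized since there are only finitely many), and likewise $\gd R=\max\{\fd S_R\}$. I expect the one genuinely delicate point to be the left noetherian case of the middle estimate: \cref{thm:main}\,(i) as stated assumes $R$ is semiperfect, whereas a left noetherian semilocal ring need not be (for instance a semilocal Dedekind domain with more than one maximal ideal), so that case has to be routed through the epimorphism results rather than through \cref{thm:main} verbatim.
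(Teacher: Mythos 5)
Your proof is correct and follows essentially the same route as the paper: establish $\fd(R/J(R))_R=\max\{\fd S_R\}$, then bound $\gd R$ from above via the projection $R\epic R/J(R)$, using \cref{thm:main} in the left perfect case and falling back on \cite[Theorem~1]{Small} (or \cite[Theorem~1.8]{KKS}) in the left noetherian semilocal case. You correctly flag the subtle point that \cref{thm:main}\,(i) cannot be invoked verbatim when $R$ is left noetherian semilocal, since such a ring need not be semiperfect (your Dedekind-domain example is apt), and that this is exactly why the surjective case must be routed through Small's change-of-rings theorem; the paper handles it the same way. The one cosmetic difference: the paper separately records the equality $\gd R=\wgd R$ by noting that flat coincides with projective for (finitely generated) left modules over left perfect (resp.\ left noetherian) rings, whereas you close the sandwich using only the always-true inequality $\wgd R\leq\gd R$, which is a small streamlining but reaches the same conclusion.
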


	\begin{proof}
		In both cases $R$ is semilocal, implying that $\gd { R / J(R) } = 0$ and $ { R / J(R) } _R$ is a semisimple module containing every simple right $R$-module up to isomorphism as a direct summand. In particular, it holds that
		\[
		\fd \, {R / J(R)}_R = \max \{ \fd S_R \ | \ S_R \textrm{ simple} \}.
		\]
		Furthermore, it holds that $\gd R = \wgd R$, where $\wgd R$ denotes the weak global dimension of $R$. Indeed, for a left perfect (resp.\ left noetherian) ring, a (finitely generated) left module is projective if and only if it is flat, implying that the projective and flat dimension of a (finitely generated) left module coincide.
		
		Applying \hyperlink{thm:main}{\cref{thm:main}{}}{} for the radical-preserving (see \cref{propbasic}{}) natural epimorphism $R \epic R / J(R)$  if $R$ is left perfect, or \cite[Theorem~1]{Small}{} if $R$ is left noetherian semilocal, yields
		\[
		\gd R \leq \fd {R / J(R)}_R
		\]
		and the result follows from the fact that $ \fd { R / J(R) }_R \leq \wgd R$.
	\end{proof}

	\begin{rem}
		Note that \cref{corgldim1}{} recovers part of \cite[Corollary~2.7]{Mochi}{}.
		Moreover, if we assume $R$ to be left perfect, then its global dimension is attained among the injective dimensions of simple left modules (see \cite[Lemma~13]{Osofsky}{}), implying that $\max \{ \idim {}_R L \ | \ {}_R L \textrm{ simple}\} = \max \{ \fd S_R \ | \ S_R \textrm{ simple}\}$.
	\end{rem}

	\smallskip
	
	\section{Algebras of quasi-uniform Loewy length}		\label{sec:algebras.of.q.unif.Ll}
	
	\smallskip

	The main aim of this section is to prove \hyperlink{thmB}{Theorem B}{} from the introduction. We begin with a well-known fact about semiprimary rings $ \La $ such that every indecomposable direct summand of the regular module ${}_\La \La $ has Loewy length equal to the Loewy length of the ring. We say that such a ring has \emph{uniform Loewy length (from the left)}, and provide a generalization later on.
	
	\begin{prop}	\phantomsection		\hypertarget{prop:friendly}{}
		\label{prop:friendly}{}
		The big finitistic dimension of a semiprimary ring with uniform Loewy length is zero.
	\end{prop}

	\begin{proof}
		Let $ \La $ be a semiprimary ring with uniform Loewy length, and note that $ \La $ is perfect as it is semilocal and its Jacobson radical is nilpotent. In particular, all $ \La $-modules have a projective cover. According to \cref{lem:2}{}, this means that for every non-zero $ \La $-module $ M $ there is an epimorphism $ f \colon P \epic M $ where ${}_\La P $ is projective and the kernel $ K = \Ker f $ is contained in $ \rad {}_\La P = J( \La ) P $. It follows that the Loewy length of $ K $ is strictly smaller than the Loewy length of $P$, implying that $ K $ is not projective as all projective $ \La $-modules have the same Loewy length. Indeed, every projective module is isomorphic to a direct sum of direct summands of the regular module; see \cite[Proposition~28.13]{AndersonFuller}{}. Therefore, if $ M $ is not projective, then $ K $ is a non-zero non-projective $ \La $-module. We deduce that $ \pd _\La M = \infty $ by iterating this process. All in all, we have shown that a $ \La $-module is either projective or has infinite projective dimension, and the proof is complete.
	\end{proof}

	We go on to fix some notation before introducing the key concept of this section. For any semiprimary ring $ \La $, we let $ \{ e_i \}_i  $ be a fixed complete set of primitive orthogonal idempotents. Furthermore, we denote by $e_\infty $ the sum of all idempotents $e_i$ such that the injective dimension of $ \topp_\La \La e_i $ is infinite. Last, we say that the Loewy length of a $ \La $-module $ M $ is \emph{maximal} if it is equal to the Loewy length of $ \La $, since $ \Ll (  M ) \leq \Ll ( \La ) $ in general.

	\begin{defn}
		\label{defn:q.unif.Ll.1}
		A semiprimary ring $ \La $ has \emph{quasi-uniform Loewy length (from the left)} if the Loewy length of every indecomposable projective module occurring as a direct summand of $ {}_\La \La e_\infty $ is maximal.
	\end{defn}

	We remark that among semiprimary rings uniform Loewy length and finite global dimension are the two extremes for quasi-uniform Loewy length. Indeed, a semiprimary ring $\La$ has quasi-uniform Loewy length if and only if for every idempotent in $\{  e_i \}_i $ it holds that (i) the Loewy length of $\La e_i$ is maximal or (ii) the injective dimension of $ \topp_\La \La e_i $ is finite. Therefore, the Loewy length of $\La$ is uniform exactly when condition (i) holds for every idempotent in $\{  e_i \}_i $, while the global dimension of $\La$ is finite exactly when condition (ii) holds for every idempotent (see \cite[Corollary~11]{Ausgldim}{}).

	The following lemma is a short detour from the main goal of this section.
	
	\begin{lem}
		Let $ \La $ be an Artin algebra of uniform Loewy length from the left. Then the Loewy length of $ \La $ is quasi-uniform from the right if and only if it is uniform from the right.
	\end{lem}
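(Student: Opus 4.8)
The plan is to prove the nontrivial implication by contradiction, converting the hypothesis on the right into a statement about left modules by means of the standard duality for Artin algebras, and then invoking the fact recalled at the beginning of this section that, for an algebra of uniform Loewy length from the left, every module of finite projective dimension is projective.

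The implication ``uniform from the right $\Rightarrow$ quasi-uniform from the right'' holds for any Artin algebra and uses no hypothesis on $\La$: if every indecomposable projective right $\La$-module has maximal Loewy length, then in particular so does each projective summand of the module $(e_\infty)'\La$ associated with $\La^{\mathrm{op}}$ as in \cref{defn:q.unif.Ll.1}.

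For the converse, I would assume that $\La$ is uniform from the left and quasi-uniform from the right but \emph{not} uniform from the right, and fix an idempotent $e_{i_0}$ such that the indecomposable projective right module $e_{i_0}\La$ has non-maximal Loewy length. Applying \cref{defn:q.unif.Ll.1} to $\La^{\mathrm{op}}$, quasi-uniformity from the right forces the simple right module $T=\topp e_{i_0}\La$ to have finite injective dimension. Passing through the standard duality $D$ between finitely generated left and right $\La$-modules --- which is exact, carries simple modules to simple modules, and satisfies $\idim N_\La=\pd{}_\La D(N)$ --- we obtain a simple left $\La$-module $D(T)$ of finite projective dimension.

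Now uniformity from the left enters: by the observation at the beginning of this section, every $\La$-module is either projective or of infinite projective dimension, so $D(T)$ is projective, hence isomorphic to an indecomposable projective left $\La$-module, say $\La e_j$. Then $\Ll(\La)=\Ll(\La e_j)=\Ll(D(T))=1$, the first equality by uniformity from the left, so $\La$ is semisimple; but then $e_{i_0}\La$ has Loewy length $1=\Ll(\La)$, contradicting the choice of $e_{i_0}$. I expect the only step requiring real care to be this first reduction --- reading off correctly from \cref{defn:q.unif.Ll.1} applied to $\La^{\mathrm{op}}$ that $T=\topp e_{i_0}\La$ has finite injective dimension and transporting it through $D$ --- after which the left-uniformity hypothesis closes the argument in one line.
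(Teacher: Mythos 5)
Your proof is correct and follows essentially the same route as the paper: pick an indecomposable projective right module of non-maximal Loewy length, use quasi-uniformity from the right to get finite injective dimension of its top, transport through the standard duality $D$ to a simple left module of finite projective dimension, and invoke uniformity from the left to conclude that this simple is projective, forcing $\Ll(\La)=1$ and hence semisimplicity, a contradiction. The only cosmetic difference is that you also spell out the trivial forward direction, which the paper omits.
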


	\begin{proof}
		Let us assume that the Loewy length of $\La $ is uniform from the left, and quasi-uniform but not uniform from the right. Then there is a primitive idempotent $e_i$ such that the Loewy lenght of $ e_i \La _\La $ is not maximal but the injective dimension of $ \topp e_i \La_\La $ is finite. It follows that the projective dimension of $ \topp_\La \La e_i $ is finite, since $D ( \topp e_i \La_\La ) \simeq \topp_\La \La e_i $ for the standard duality $D$ between finitely generated left and right $\La$-modules. Consequently, the uniformity of the Loewy length of $\La $ from the left implies that $ \topp_\La \La e_i $ is projective (see \hyperlink{prop:friendly}{\cref{prop:friendly}{}}{}), which is possible only if $ J( \La ) \La e_i = \rad_\La \La e_i = 0 $ since the natural epimorphism $ \La e_i \epic \topp_\La \La e_i $ is a projective cover. We deduce that $\La $ is semisimple as $\Ll (\La ) = \Ll ( \La e_i ) = 1 $, implying that its Loewy length is uniform from both sides, a contradiction. The other direction follows by definition, completing the proof.
	\end{proof}

	We are now ready to prove the main result of the section.

	\begin{thm}
			\label{thm:B}
		Let $ \La $ be a semiprimary ring. Then $ \Fpd \La $ is at most the supremum of the injective dimensions of simple $ \La $-modules $ S $ such that the projective cover of $ S $ has non-maximal Loewy length. In particular, if $ \La $ has quasi-uniform Loewy length, then $ \Fpd \La $ is finite.
	\end{thm}

	\begin{proof}
		Let $ {}_\La M $ be a module of finite projective dimension $ \pd_\La M = n > 0 $. We begin by showing that $ \Ext_\La^n ( M , \topp_\La P_n ) \neq 0 $, where
			\[
				0 \to P_n \xrightarrow{f_n} P_{n-1} \to \cdots \to P_1 \xrightarrow{f_1} P_0 \xrightarrow{f_0} M \to 0
			\]
		is a minimal projective resolution of $ M $. The availability of such a resolution is due to $ \La $ being a perfect ring (with nilpotent Jacobson radical).
		
		By definition, we have
			\[
				\Ext_\La^n ( M , \topp_\La P_n ) \simeq \Hom_\La ( P_n , \topp_\La P_n ) / \Image f_n^*
			\]
		where $ f_n^* = \Hom_\La ( f_n , \topp_\La P_n ) $ sends a $ \La $-homomorphism $ g \colon P_{n-1} \to \topp _\La P_n $ to $ f_n^* ( g ) = g f_n $. But $ \Image f_n $ is equal to the kernel of $ f_{n-1} $, which is a projective cover, implying that it is a superfluous submodule of $ P_{n-1} $. Therefore $ \Image f_n $ is contained in $ J( \La ) P_{n-1} $ according to \cref{lem:2}{}. Furthermore, we have $ g( J( \La ) P_{n-1} ) = 0 $ since $ J( \La ) \topp_\La P_n = J( \La ) ( P_n / J( \La ) P_n ) = 0 $. We deduce that $ f_n^*(g) = 0 $ for every $ g $, that is $ \Image f_n^* = 0 $. It remains to observe that $ \Hom_\La ( P_n , \topp_\La P_n ) $ is non-trivial, since it contains the natural epimorphism for instance.
		
		The fact that $ \Ext_\La^n ( M , \topp_\La P_n ) $ is non-trivial implies that $ \idim \topp_\La P_n \geq n $. But $ \topp_\La P_n $ is a direct sum of simple modules since $ \La $ is semilocal, and the projective covers of these simple modules have non-maximal Loewy length as they are direct summands of $ P_n $. Indeed, the Loewy length of $ P_n $ is bounded above by the Loewy length of $ J( \La ) P_{ n-1 } $ due to the embedding $ f_n $, and $ \Ll (  J( \La ) P_{ n-1 } ) = \Ll ( P_{ n-1 } ) -1 $. The first assertion is now evident.
		
		For the last assertion, observe that the Loewy length of $ \La $ is quasi-uniform if and only if the established upper bound for $ \Fpd \La $ is finite. Indeed, assume that the Loewy length of $ \La $ is quasi-uniform and $ S $ is a simple $ \La $-module with projective cover $ P $ whose Loewy length is not maximal. Let $ e_i $ be the primitive idempotent such that $ P \simeq \La e_i $, implying that $ S \simeq \topp_\La \La e_i $. Since the Loewy length of $ \La e_i $ is not maximal, it holds that $ \La e_i $ cannot occur as a direct summand of $ \La e_\infty $ and thus $ \idim_\La S < \infty $. The first implication follows now from the fact that there is a finite number of simple $ \La $-modules up to isomorphism. Conversely, assume that for every simple $ \La $-module $ S $ whose projective cover $ P $ has non-maximal Loewy length, it holds that $ \idim_\La S < \infty $. Now let $ P = \La e_i $ be a direct summand of $ \La e_\infty $ for some primitive idempotent $ e _i $, that is $ \idim _\La S = \infty $ for $ S = \topp _\La \La e_i $. Therefore, it holds that the Loewy length of $ \La e_i $ is maximal by assumption as the natural epimorphism $ \La e_i \epic \topp_\La \La e_i $ is a projective cover, which completes the proof.
	\end{proof}

	\begin{cor}
		\label{cor:q.unif.Ll}
		Let $ \La $ be a semiprimary ring with finite global dimension. Then its global dimension is attained among the injective dimensions of simple $ \La $-modules whose projective cover has non-maximal Loewy length.
	\end{cor}
	
	\begin{proof}
		Let $ S $ be a simple $ \La $-module whose projective cover has non-maximal Loewy length, chosen so that $ \idim _\La S $ is maximal. Then $ \gd \La = \Fpd \La \leq \idim _\La S \leq \gd \La $, where the first inequality follows from \cref{thm:B}{}.
	\end{proof}

	\begin{rem}
		It is evident from the proof of \cref{thm:B}{} that the established upper bound for $ \Fpd \La $ can be sharpened by considering only the simple $ \La $-modules $ S $ whose projective cover occurs as a direct summand of the last term in the minimal projective resolution of a $ \La $-module $ M $ with $ \pd _\La M < \infty $. This possibly smaller upper bound makes sense even when the Jacobson radical of the ring is not nilpotent, allowing thus for the following generalization.
	\end{rem}

	\begin{cor}
		Let $ \La $ be a left perfect ring. Then $ \Fpd \La $ is at most the supremum of the injective dimensions of simple $ \La $-modules $ S $ such that the projective cover of $ S $ occurs as a direct summand of the last term in the minimal projective resolution of a $ \La $-module $ M $ with $ \pd_\La M < \infty $. Moreover, an analogous upper bound holds for $ \fpd \La $ if $ \La $ is left noetherian semiperfect, by requiring additionally $ M $ to be finitely generated.
	\end{cor}

	\begin{proof}
		The proof for both cases is analogous to the proof of \cref{thm:B}{}. In the first case, left perfectness ensures all properties employed there. Specifically, every $ \La $-module $ M $ possesses a minimal projective resolution, the superfluous submodules of $ P_{ n-1 } $ are exactly the ones contained in $ J( \La ) P_{ n-1 } $, and $ P_n $ is the projective cover of its top which is a semisimple module. In the second case, we assume that $ M $ is finitely generated, and therefore it possesses a minimal projective resolution where all projective modules $ P_i $ are finitely generated, whence \cref{lem:2}{} ensures the desired characterization of the superfluous submodules of $ P_{ n - 1 } $.
	\end{proof}

	Restricting our attention to Artin algebras we obtain one more corollary.

	\begin{cor}
			\label{cor:2}
		For an Artin algebra $ \La $ with quasi-uniform Loewy length, it holds that $ \Fpd \La $ is finite and bounded above by $ \fpd \La^{ \! \mathrm{op} } $.
	\end{cor}

	\begin{proof}
		Let $ S $ be a simple $ \La $-module whose projective cover has non-maximal Loewy length, chosen so that $ \idim _\La S $ is maximal. Of course $ \idim _\La S < \infty $ as the Loewy length of $ \La $ is quasi-uniform, and $ \Fpd \La \leq \idim _\La S $ according to \cref{thm:B}{}. For the second assertion observe that $ \idim _\La S = \pd D ( {}_\La S ) _\La \leq \fpd \La ^{ \! \mathrm{op} } $, where $ D $ is the standard duality between finitely generated left and right $ \La $-modules.
	\end{proof}

	We proceed by providing an alternative proof for \cref{thm:B}{} with a combinatorial flavor, in the context of bound quiver algebras.

	For a finite quiver $Q$, we denote the sets of vertices and arrows of $Q$ by $Q_0$ and $Q_1$, respectively. We write $\BQ$ to denote the set of all paths in $Q$, including the trivial paths denoted by $e_i$ for every vertex $i \in Q_0 $.
	The source and target of a path $ p $ are denoted by $s(p)$ and $t(p)$, respectively, and we write $p q $ for two paths $p , q$ to denote the concatenation of $p$ followed by $q $. Furthermore, we write $\a \colon i \to j$ to denote an arrow $\a$ such that $s(\a) = i $ and $t(\a) = j$.

	\begin{con}[Uniformization]
		\label{con:q.unif.Ll.1}
		Let $ \La = k Q / I$ be a bound quiver algebra with Loewy length $ l = \Ll (\La )$ and vertices labeled $ 1, 2, \ldots, n $. We assume that $ \Ll( \La e_i) = l $ if and only if $ m < i \leq n$, where $ 0 \leq m < n $, by relabeling the vertices if necessary.
		Let $Q^*$ denote the quiver that results from $Q$ if we add one extra loop $\lal_i$ at each vertex $ i \leq m$, that is $Q^* = Q \dot \cup B$ where $ B = \{\lal_i \colon i \to i \ | \ i = 1, \ldots, m\} $.
		Furthermore, let $I^*$ be the ideal of $k Q^*$ generated by $ I $ and the set
		\begin{equation*}
			T_{\mathrm{u}} = \{ \lal_i^{l} , \, \gamma \lal_i, \, \lal_i \delta \ | \ i = 1, \ldots, m \}
		\end{equation*}
		where $\gamma$ and $\delta $ range over all arrows of $Q$ with target $i$ or source $i$, respectively. The \emph{uniformization} of $\La $ is defined to be the algebra $ \La\!^* = k Q^* / I^*$.
	\end{con}

	The natural inclusion $Q \monicc Q^*$ justifies one to view the elements of $k Q$ as elements of $k Q^*$ without risk of confusion. In particular, we use $e_i$ to denote both the trivial path of $k Q$ and $k Q^*$ that corresponds to any vertex $i \in Q_0 $.

	Recall that every element $ z $ of the path algebra $ k Q $ can be written uniquely as a sum $ z = \sum_p \mu_p \cdot p $
	where $p$ ranges over $\BQ $ and only finitely many coefficients $\mu_p \in k$ are non-zero.
	We say that a path $p $ \emph{occurs} in $z$ if $\mu_p $ is non-zero.
	Similarly, path $p$ \emph{occurs} in a subset $ T \subseteq k Q$ if it occurs in some element of $ T $.
	Furthermore, path $ p $ is \emph{divided by} a path $ q $ if $p = p_1 q p_2 $ for paths $p_1, p_2 \in \BQ $. We also say that $p$ \emph{avoids} $ q $ if $p$ is not divided by $ q $.

	\begin{lem}		\hypertarget{lem:q.unif.Ll.1}{}
		\label{lem:q.unif.Ll.1}
		The algebra $ \La \!^*$ is a bound quiver algebra. Furthermore, its Loewy length is bounded above by the Loewy length of $ \La $.
	\end{lem}
	
	\begin{proof}
		Every path occurring in $I^*$ has length at least two as the same holds for every path occurring in $I$ or $T_{\mathrm{u}}$. Now let $p$ be a path in $Q^*$ of length equal to $ l = \Ll ( \La ) $. If $p$ avoids all loops in $B$, then $p \in I$ implying that $p \in I^*$. If $p$ is divided by some loop $\lal_i \in B $, then either $p = \lal_i^{ l } $ or it contains a subpath of the form $\gamma \lal_i$ or $\lal_i \delta $ for appropriate arrows $\gamma , \delta \in Q$, implying that $ p $ is divided by a path in $ T_{\mathrm{u}} $. Hence $p \in I^*$ in every case, which completes the proof.
	\end{proof}
	
	In what follows, for a set of arrows $A \subseteq Q_1 $, we denote by $\BQA$ the set of paths in $Q$ divided by at least one arrow in $A$, and $\BQnotA$ denotes the set of paths avoiding all arrows in $ A $.
	Furthermore, for any element $z \in k Q$, we write $z_A$ and $z_{\nott \! A}$ to denote the unique elements of the subspaces $ {}_k \la \BQA \ra $ and $ {}_k \la \BQnotA \ra $ of $ k Q $, respectively, such that $z = z_A + z_{\nott \! A}$.

	\begin{lem}		\label{lem:q.unif.Ll.2}		\hypertarget{lem:q.unif.Ll.2}{}
		The following statements hold for a  bound quiver algebra $\La = k Q / I$ and its uniformization algebra $\La \! ^* = k Q^* / I^*$.
		\begin{enumerate}[\rm(i)]
			\item An element $w \in k Q^*$ is in $ I^* $ if and only if $w_{\nott \! B} \in I$ and every path occurring in $w_B$ is divided by a path from $ T_{\mathrm{u}} $. In particular, it holds that $ I^* \cap k Q = I $.
			\item The Loewy length of $\La \! ^*$ is uniform and equal to the Loewy length of $ \La $.
			\item The inclusion $Q \monicc Q^*$, sending each vertex and arrow of $Q$ to itself, induces an injective algebra homomorphism $ \i \colon \La \monicc \La \! ^* $. Similarly, the projection $Q^* \epic Q$, sending each vertex and arrow of $Q$ to itself and every loop in $B$ to zero, induces a surjective algebra homomorphism $\pi \colon \La \! ^* \epic \La $. Moreover, it holds that $\pi \i = \id_{\La} $.
		\end{enumerate}
	\end{lem}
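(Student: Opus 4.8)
The plan is to establish part~(i) first --- an explicit description of $I^*$ inside $kQ^*$ --- and then to read off (ii) and (iii) from it. First I would classify the paths of $Q^*$: those avoiding every loop of $B$ are exactly the paths of $Q$, and I claim that any path $p$ of $Q^*$ passing through some loop $\beta_i \in B$ is either a pure power $\beta_i^{k}$ (with $1 \le k < l$) or is divided by some element of $S_B$. To see this, take a maximal run of a single loop $\beta_i$ inside $p$: if it has length $\ge l$, then $\beta_i^{l}$ divides $p$; if an arrow of $p$ immediately precedes or follows this run, it cannot be another loop of $B$ (distinct loops are attached to distinct vertices), so it is an arrow $\gamma$ of $Q$ with $t(\gamma) = i$ or an arrow $\delta$ of $Q$ with $s(\delta) = i$, and then $\gamma\beta_i$ or $\beta_i\delta$ divides $p$; the only remaining case is $p = \beta_i^{k}$, which for $k < l$ is divided by no element of $S_B$, since every other generator in $S_B$ involves an arrow of $Q$. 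Hence $\mathcal{B}_{Q^*}$ is the disjoint union of $\BQ$, the set $\PP_1$ of paths through $B$ that are divided by some element of $S_B$, and the set $\PP_2 = \{\beta_i^{k} : 1 \le i \le m,\ 1 \le k < l\}$; correspondingly $kQ^* = {}_k\la \BQ \ra \oplus {}_k\la \PP_1 \ra \oplus {}_k\la \PP_2 \ra$, where for $w \in kQ^*$ the first component is $w_{\nott \! B}$ and the last two sum to $w_B$.

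The second step is to prove $I^* = I \oplus {}_k\la \PP_1 \ra$. The inclusion $\supseteq$ is immediate, since $I \subseteq I^*$ by construction and every path in $\PP_1$ is divided by an element of $S_B$ and hence lies in $I^*$. For $\subseteq$, it suffices to check that $p\,z\,q$ lies in $I \oplus {}_k\la \PP_1 \ra$ for all paths $p,q$ of $Q^*$ and every $z$ in $S_B$ or in a fixed generating set of $I$. If $z = \sigma \in S_B$, then $p\sigma q$ is $0$ or a single path divided by $\sigma$, hence in ${}_k\la \PP_1 \ra$. If $z$ generates $I$, then $pzq$ is a linear combination of products $puq$ with $u$ a path of $Q$ of length $\ge 2$ (here admissibility of $I$ is used): if $p$ and $q$ both avoid $B$, then $puq$ is a path of $Q$ and the whole contribution lies in $I$; if $p$ (or $q$) passes through a loop of $B$, then the classification above, applied to that factor together with the nontriviality of $u$, forces $puq$ to be divided by an element of $S_B$, hence to lie in ${}_k\la \PP_1 \ra$. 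With this decomposition in hand, part~(i) is a restatement: $w \in I^*$ iff the $\BQ$-component $w_{\nott \! B}$ lies in $I$ and the $\PP_2$-component of $w$ vanishes, i.e.\ iff $w_{\nott \! B} \in I$ and every path occurring in $w_B$ is divided by some element of $S_B$. This step is the main obstacle: one must rule out that a relation, pre- and post-multiplied by arbitrary paths of $Q^*$, ever contributes a short loop power $\beta_i^{k}$ ($k<l$) or a path of $Q$ outside $I$, and this is exactly where the precise shape of $S_B$ (annihilating $\beta_i^{l}$ and all products of $\beta_i$ with an adjacent arrow of $Q$) and the admissibility of $I$ come in.

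The remaining parts are then short. For (ii): if $i > m$, a path of $Q^*$ that could contribute a basis element of $\La^* e_i$ and uses a loop of $B$ cannot be a pure loop power (those touch only vertices $\le m$), so by the classification it is divided by an element of $S_B$ and vanishes in $\La^*$; together with part~(i) this shows $\La^* e_i$ and $\La e_i$ have the same underlying path basis, whence $\Ll(\La^* e_i) = \Ll(\La e_i) = l$. Since $m < n$, such an $i$ exists, so $\Ll(\La^*) \ge l$, and with \cref{lem:q.unif.Ll.1}{} we conclude $\Ll(\La^*) = l$. If $i \le m$ (so $l \ge 2$), part~(i) gives $\beta_i^{l-1} \notin I^*$, so $\beta_i^{l-1}$ is a nonzero element of $\rad^{l-1}(\La^* e_i)$ and $\Ll(\La^* e_i) \ge l$, hence $= l$ by \cref{lem:q.unif.Ll.1}{}; therefore every indecomposable projective over $\La^*$ has Loewy length $l$, i.e.\ the Loewy length of $\La^*$ is uniform and equal to $\Ll(\La)$. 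For (iii): the inclusion $Q \monicc Q^*$ induces an algebra map $\wt i \colon kQ \to kQ^*$ with $\wt i(I) = I \subseteq I^*$, hence an algebra homomorphism $i \colon \La \to \La^*$, and it is injective because by part~(i) an element of $kQ$ lies in $I^*$ exactly when it lies in $I$; the projection $Q^* \epic Q$ induces $\wt\pi \colon kQ^* \to kQ$ killing the loops of $B$, and since $\wt\pi(I) = I$ and $\wt\pi\bigl({}_k\la\PP_1\ra\bigr) = 0$ it descends to a surjection $\pi \colon \La^* \epic \La$; finally $\pi i = \id_\La$ by inspecting the generators $e_i \mapsto e_i$ and $\alpha \mapsto \alpha$ for $\alpha \in Q_1$.
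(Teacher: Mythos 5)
Your proof is correct and follows essentially the same route as the paper's, resting on the same key observation that a path of $Q^*$ passing through a loop of $B$ and not a short pure loop power must be divided by an element of $S_B$. The extra explicitness you supply --- recording a trichotomy of paths and stating part (i) as the $k$-linear decomposition $I^* = I \oplus {}_k\la\PP_1\ra$ --- is a cosmetic repackaging of the paper's direct analysis of generic elements $usv$ with $s \in S \cup S_B$, and parts (ii) and (iii) are then read off in the same way.
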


	\begin{proof}
		(i) One implication follows immediately from the definition of $I^*$. For the converse implication, we characterize the elements of $ I^* $. Any element $w \in I^*$ is a $k$-linear combination of elements of the form $u t v $ where $ t \in I \cup T_{\mathrm{u}} $ and $u$, $ v$ are paths in $Q^*$. We split these terms into two cases. The first case is when $ t \in I $ and both paths $ u $ and $ v $ are in the original quiver $ Q $. In this case, we have $ utv \in I $ since $ I $ is an ideal of $ k Q $.
		
		In the second case, either $ t \in T_{\mathrm{u}} $ or at least one of $ u $ and $ v $ is divided by a loop $ \lal_i \in B $. In every such instance, all paths occurring in $ utv $ contain a subpath from $ T_{\mathrm{u}} $. For example, if $ t \in I $ and $ u $ is divided by some loop $ \lal_i \in B $, then every path occurring in $ u t v $ contains a subpath of the form $ \lal_i \d $ for arrows $ \d \in Q $ with source $ i $, since every path occurring in $ t $ is in $ Q $ and has length at least two.
		
		It follows that for any $ w \in I^* $, the element $w_{\nott \! B}$ is a $k$-linear combination of elements $ u t v $ of the first case and thus in $I$, and $w_{B}$ is a $k$-linear combination of elements $ u t v $ where every path contains a subpath from $ T_{ \mathrm{u} } $. The proof of equality $ I^* \cap k Q = I $ is now straightforward. The inclusion $ I \subseteq I^* \cap k Q $ is trivial. For the converse, if $ w \in I^* \cap k Q $ we have $ w = w_{\nott \! B} \in I $, which completes the proof.

		(ii) We have to show that there is a non-zero path $ p $ in $ \La \! ^*$ with length $ l - 1 $ and target $i$ for every vertex $ i $, as we already know that $ \Ll( { \La \! ^* e_i } ) \leq l = \Ll ( \La ) $ from \hyperlink{lem:q.unif.Ll.1}{\cref{lem:q.unif.Ll.1}{}}{}. For $i > m$, we let $p$ be a non-zero path of $ \La $ of length $ l - 1$ and target $i$, whose existence is guaranteed by the assumption $\Ll ( { \La e_i } ) = l $. For $i \leq m$, we set $ p = \lal_i^{ l - 1 } $.
		In both cases, it follows from (i) that $p $ is non-zero in $\La \! ^*$.

		(iii) It is immediate that the inclusion $Q \monicc Q^*$ induces an injective algebra homomorphism $k Q \monicc k Q^* $, see for instance \cite[Theorem~II.1.8]{ASS}{}. Furthermore, this induces an injective algebra homomorphism $ \La \monicc \La\!^* $ as $ I^* \cap k Q = I $ according to (i). Similarly, the projection $Q^* \epic Q$ induces the surjective algebra homomorphism $k Q^* \epic k Q $ sending $ w $ to $ w_{\nott \! B } $ for every $w \in k Q^*$, and this induces a surjective algebra homomorphism $ \La\!^* \epic \La $ as $w_{\nott \! B} \in I $ for any $w \in I^*$. The equality $\pi \i = \id_\La $ follows directly from the definitions, completing the proof of the lemma.
	\end{proof}

	We proceed to reprove \cref{thm:B}{} in the context of bound quiver algebras utilizing the results of \cref{sec:rad.pres.homs}{}.
	For a bound quiver algebra $\La = k Q / I $, we denote by $f_{\mathsf{n.m}}$ the sum of all trivial paths such that the corresponding indecomposable projective modules have non-maximal Loewy length. In other words, we define $ f_{\mathsf{n.m}} = \sum_{ 1 \leq i \leq m} e_i $ in the setup of \cref{con:q.unif.Ll.1}{}.
	Furthermore, we write $S_\La ( {i} )$ and $S_{ \La ^{ \! \mathrm{op}} } ({ i })$ to denote the simple left and right $\La$-module corresponding to every vertex $i \in Q_0$, respectively.

	\begin{thm}
		\label{thm:q.unif.Ll}
		For every bound quiver algebra $ \La = k Q / I $, it holds that
		\[
		\Fpd \La \leq \idim \topp_\La { \La f_{\mathsf{n.m}} } .
		\]
		In particular, if $\La $ has quasi-uniform Loewy length, then $\Fpd \La $ is finite and bounded above by $ \fpd \La ^{ \! \mathrm{op }} $.
	\end{thm}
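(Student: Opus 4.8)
The plan is to apply \hyperlink{thmA}{Theorem A} (i.e.\ \cref{thm:main}{}) to the monomorphism $i \colon \La \monicc \La\!^*$ from \cref{lem:q.unif.Ll.2}{}(iii), combined with the obvious observation that $\Fpd \La\!^* = 0$ since $\La\!^*$ has uniform Loewy length by \cref{lem:q.unif.Ll.2}{}(ii). The only non-formal input is the computation of $\fd (\La\!^*)_\La$ and the identification of the right $\La$-module $(\La\!^*)_\La$: I expect $(\La\!^*)_\La$ to decompose as $\La_\La$ plus a direct sum of copies of the simple right modules $S_{\La\!^{\mathrm{op}}}(i)$ for $i \leq m$ (one copy of $S_{\La\!^{\mathrm{op}}}(i)$ for each ``new'' basis element $\beta_i^j$, $1 \leq j \leq l-1$), so that $\fd (\La\!^*)_\La = \max\{ \fd S_{\La\!^{\mathrm{op}}}(i)_\La \mid i \leq m \}$, which equals $\idim \topp_\La \La e_i$ for the largest such $i$ by the standard duality (and Auslander's \cite[Corollary~11]{Ausgldim}{}).

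\begin{proof}
	By \cref{lem:q.unif.Ll.2}{}(ii), the Loewy length of $\La\!^*$ is uniform, hence $\Fpd \La\!^* = 0$ as noted at the beginning of this section. By \cref{lem:q.unif.Ll.2}{}(iii), the inclusion $Q \monicc Q^*$ induces an algebra monomorphism $i \colon \La \monicc \La\!^*$ with $\pi i = \id_\La$; since $\La$ is basic, \cref{propbasic}{} shows that $i$ is radical-preserving, and its kernel is zero, hence superfluous. By \cref{thm:main}{}(ii) (with $A = \La$ left perfect and $J(\La\!^*)$ nilpotent, being the Jacobson radical of a finite dimensional algebra), it holds that
	\[
		\Fpd \La \leq \Fpd \La\!^* + \fd (\La\!^*)_\La = \fd (\La\!^*)_\La .
	\]

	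It remains to compute $\fd (\La\!^*)_\La$. The right $\La$-module structure on $\La\!^*$ is obtained by restriction of scalars along $i$. Using \cref{lem:q.unif.Ll.2}{}(i), a $k$-basis of $\La\!^*$ is given by the nonzero paths of $Q^*$ not divided by any relation, which splits into the nonzero paths of $Q$ (forming the image $i(\La) \simeq \La$) together with the paths $\beta_i^{\, j}$ for $1 \leq i \leq m$ and $1 \leq j \leq l-1$. Since every arrow $\gamma$ of $Q$ with target $i$ and every arrow $\delta$ of $Q$ with source $i$ satisfies $\beta_i \delta = 0 = \gamma\beta_i$ in $\La\!^*$, multiplying $\beta_i^{\, j}$ on the right by any arrow of $Q$ gives zero, so the span of $\{\beta_i^{\, j}\}$ is a right $\La$-submodule $N$ on which $J(\La)$ acts as zero; moreover $N e_i = {}_k\langle \beta_i^{\, j} \rangle_{1 \leq j \leq l-1}$ shows $N \simeq \bigoplus_{i=1}^{m} S_{\La\!^{\mathrm{op}}}(i)^{\,(l-1)}$ as a right $\La$-module. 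The short exact sequence $0 \to i(\La) \to \La\!^* \to N \to 0$ of right $\La$-modules (note $i(\La)$ is a right ideal by \cref{lem:q.unif.Ll.2}{}(i), being a span of paths of $Q$) is split as a sequence of right $\La$-modules via $\pi$, so $(\La\!^*)_\La \simeq \La_\La \oplus N$ and therefore
	\[
		\fd (\La\!^*)_\La = \fd N_\La = \max\{ \fd S_{\La\!^{\mathrm{op}}}(i)_\La \mid 1 \leq i \leq m \} .
	\]
	For every $i$, the standard duality $D$ between right and left finitely generated $\La$-modules gives $D(S_{\La\!^{\mathrm{op}}}(i)) \simeq S_\La(i) = \topp_\La \La e_i$, and since $\La$ is left noetherian, flat equals projective, whence $\fd S_{\La\!^{\mathrm{op}}}(i)_\La = \pd S_{\La\!^{\mathrm{op}}}(i)_\La = \idim {}_\La D(S_{\La\!^{\mathrm{op}}}(i)) = \idim \topp_\La \La e_i$. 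As $\{ e_i \mid i \leq m \}$ are exactly the trivial paths whose projective cover has non-maximal Loewy length, i.e.\ $\sum_{i \leq m} e_i = 1 - e_{\mathsf{m}}$, and $\topp_\La \La(1 - e_{\mathsf{m}}) = \bigoplus_{i \leq m} \topp_\La \La e_i$, we conclude
	\[
		\fd (\La\!^*)_\La = \max\{ \idim \topp_\La \La e_i \mid i \leq m \} = \idim \topp_\La \La(1 - e_{\mathsf{m}}) ,
	\]
	which proves the displayed inequality.

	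Finally, assume $\La$ has quasi-uniform Loewy length. By definition, for each $i \leq m$ the projective module $\La e_i$ has non-maximal Loewy length, hence $e_i$ is not a summand of $e_\infty$, which means $\idim \topp_\La \La e_i$ is finite. Thus $\idim \topp_\La \La(1 - e_{\mathsf{m}})$ is finite, so $\Fpd \La$ is finite. Moreover, the simple right modules $S_{\La\!^{\mathrm{op}}}(i)$ with $i \leq m$ all have finite projective dimension, so $\fd(\La\!^*)_\La \leq \fpd \La\!^{\mathrm{op}}$, giving $\Fpd \La \leq \fpd \La\!^{\mathrm{op}}$.
\end{proof}
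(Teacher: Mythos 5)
Your proof is correct and follows essentially the same route as the paper: apply \cref{thm:main}{}(ii) to the radical-preserving monomorphism $i \colon \La \monicc \La\!^*$, use $\Fpd \La\!^* = 0$, split $(\La\!^*)_\La \simeq \La_\La \oplus N$ via the retraction $\pi$ with $N = \ker\pi \simeq \bigoplus_{i \leq m} S_{\La\!^{\mathrm{op}}}(i)^{\oplus(l-1)}$, and convert $\fd N_\La$ to $\idim \topp_\La \La(1 - e_{\mathsf{m}})$ through the standard duality. The only blemish is the parenthetical ``$i(\La)$ is a right ideal,'' which is false (e.g.\ $e_1\beta_1 = \beta_1 \notin i(\La)$); what you need, and what your argument actually uses, is that $i(\La)$ is a right $\La$-submodule of $(\La\!^*)_\La$ for the $\La$-action restricted along $i$, which is clear since $i$ is a ring homomorphism.
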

	
	\begin{proof}
		The theorem follows from the application of \hyperlink{thm:main}{\cref{thm:main}{}}{} to the injective algebra homomorphism $ \i \colon \La \monicc \La \! ^*$ of \hyperlink{lem:q.unif.Ll.2}{\cref{lem:q.unif.Ll.2}{}.(iii)}, which is evidently radical-preserving; see also \cref{propbasic}{}. Furthermore, it holds that $\Fpd \La \! ^* = 0 $ according to \hyperlink{prop:friendly}{\cref{prop:friendly}{}}{}, as \hyperlink{lem:q.unif.Ll.2}{\cref{lem:q.unif.Ll.2}{}.(ii)} ensures that $ \La\!^* $ has uniform Loewy length. Therefore, it remains to show that $ \pd { \La  \! ^* } _\La = \idim \topp_\La { \La f_{\mathsf{n.m}} } $.
		
		Let $\La $ and $\La \! ^*$ be as in \cref{con:q.unif.Ll.1}{}, and let $\pi \colon \La \! ^*  \epic \La $ be the surjective algebra homomorphism of \hyperlink{lem:q.unif.Ll.2}{\cref{lem:q.unif.Ll.2}{}.(iii)}. The kernel of $ \pi $ is equal to the ideal of $ { \La \! ^* } $ generated by the loops in $B$, denoted by $\la B + I^* \ra $. Therefore, we have $ { \La \! ^* } _\La \simeq \La_\La \oplus { \la B + I^* \ra }_\La $ due to the equality $ \pi \i = \id_\La $, where $\la B + I^* \ra $ is viewed as a right $\La $-module via restriction of scalars along $\i$. Furthermore, the set
		\[
		\{ \lal_i^j + I^* \ | \ i = 1, 2, \ldots , m , \,  j  = 1 , 2, \ldots , l - 1  \}
		\]
		where $ l = \Ll ( \La ) $, is a $k$-basis of the ideal $ \la B + I^* \ra $, since the paths $ \lal_i^j $ as above are the only paths in $Q^*$ divided by some loop in $B$ while avoiding the paths in $T_{\mathrm{u}}$; see \hyperlink{lem:q.unif.Ll.2}{\cref{lem:q.unif.Ll.2}{}.(i)}.
		Moreover, it holds that
		\[
		\la B + I^* \ra _\La \simeq \bigoplus_{ 1 \leq i \leq m} ( { S_{\La^{ \! \mathrm{op}}} ( {i} ) } ) ^{\oplus ( l - 1 ) } 
		\]
			because $\la B + I^* \ra J (\La ) = 0 $ and $\lal_i e_i = \lal_i $ for every loop $ \lal_i \in B $. Consequently,
		\[
			\pd { \La  \! ^* } _\La = \pd \la B + I^* \ra _\La = \idim \topp_\La \La f_{ \mathsf{n.m} }
		\]
		as $D( S_{\La ^{ \! \mathrm{op}}}({i}) ) \simeq S_{\La}({i}) $ for every vertex $i \in Q_0 $, where $D $ is the standard duality between finitely generated left and right $\La $-modules. \hyperlink{thm:main}{\cref{thm:main}{}}{} implies now the desired upper bound for $ \Fpd \La $.
		
		If $\La $ has quasi-uniform Loewy length, then the injective dimension of the simple module $ S_\La ( i ) $ is finite for every vertex $i \leq m $ because $\La e_i $, which is the projective cover of $S_\La ( i ) $, has non-maximal Loewy length by assumption. Therefore, it holds that $ \Fpd \La \leq \idim \topp_\La \La f_{ \mathsf{n.m} } < \infty  $. Moreover, the established upper bound for $ \Fpd \La $ is equal to $ \pd { \la B + I^* \ra } _\La$, which is at most equal to $ \fpd \La ^{ \! \mathrm{op }} $ by definition, completing the proof of the last claim.
	\end{proof}

	\begin{rem}
			\label{rem:1}
		Note that $ \pd { \La  \! ^* } _\La $ is equal to the supremum of the injective dimensions of simple $ \La $-modules $ S $ with projective cover $ P $ whose Loewy length is non-maximal, according to the proof of \cref{thm:q.unif.Ll}{}. In \cref{con:q.unif.Ll.1}{}, we have labeled the vertices of $ Q $ so that these simple $ \La $-modules are exactly $ S_\La ( { i } ) $ for $ i = 1 ,2 \ldots , m $. Since there is a finite number of simple $ \La $-modules up to isomorphism, it is evident now that $ \i $ is of finite projective dimension (i.e.\ $ \pd { \La  \! ^* } _\La < \infty $) precisely when the injective dimension of every simple $ \La $-module $ S $ with $ \Ll( P ) $ non-maximal is finite. Equivalently, the same property holds precisely when $ \id_\La S $ being infinite implies maximality of $ \Ll ( P ) $ for every $ S $, i.e.\ when the Loewy length of $ \La $ is quasi-uniform.
	\end{rem}

	Before giving a concrete example of a family of algebras with quasi-uniform Loewy length, we consider the following conditions for a bound quiver algebra $\La = k Q / I $. If $ \La $ fails to satisfy any of these conditions, then the finiteness of its (big) finitistic dimension is either immediate or follows from the cited paper.
		\begin{enumerate}[\rm(i)]
			\item $ \La  $ has infinite global dimension.
			\item $\La $ has non-zero finitistic dimensions.
			\item $\La $ is not Iwanaga-Gorenstein (\!\!\cite{applications.of.contrvar.finite}{}).
			\item $\La$ is not monomial (\!\!\cite{GKK}{}). 
			\item The Loewy length of $\La$ is greater than three (\!\!\cite{GH}{}).
			\item $\La$ is triangular reduced (\!\!\cite{FGR}{}).
			\item The projective dimension of every simple $\La$-module is greater than one (\!\!\cite{FS}{}).
			\item Every arrow of $\La $ occurs in every generating set for $I$ (\!\!\cite{arrowrem1}{}).
		\end{enumerate}
	
	Recall that $ \La = k Q / I $ is \emph{triangular reduced} \cite{arrowrem1}{} if for every idempotent $e \neq 0 , 1$ both $e \La ( 1 - e ) $ and $(1 - e ) \La e $ are non-trivial subspaces of $\La$. Moreover, it holds that $\La $ is triangular reduced if and only if $Q$ is strongly connected, i.e.\ for every pair of vertices $v, v' \in Q_0 $ there is a path with source $v $ and target $v'$; see \cite[Lemma~3.12]{Giata2}{}.
	
	The next example contains an infinite family of bound quiver algebras with quasi-uniform Loewy length that satisfy the above conditions. In particular, the finiteness of the big finitistic dimension of the algebras follows from \cref{thm:q.unif.Ll}{}, and cannot be derived through the methods of the aforementioned papers.

	\begin{exam}
		\label{exam:q.unif.Ll}
		Fix two positive integers $n$ and $m$, where $n$ is a multiple of $5$ and at least equal to $10$. Let $ \La_{n, m} = k Q_{n, m } / I_{n , m}$ be the bound quiver algebra where $Q_{n,m}$ is the quiver of \hyperlink{fig'}{\cref{fig}{}}, the ideal $I_{n, m}$ is the ideal generated by relations $R_{n, m}$ in the table of the same figure, and $k$ is any field. For every vertex $i$ of $Q_{n, m}$, there is a loop $\lambda_i$ at $i$ exactly when $i = n+ m$ or $1 \leq i \leq n$ and $i $ is equivalent to $0$ or $4$ modulo $5$. Furthermore, if $1 \leq i < n $ then there is a unique arrow with source $i$ and target $i+1$, the arrow $\a_i $, except for $i = 5 $, and the relation $ \lambda_i \a_i^*$ is to be interpreted as both $\lambda_5 \a_5^1$ and $\lambda_5 \a_5^2$ if $i = 5$ or, else, as just the relation $\lambda_i \a_i $.

		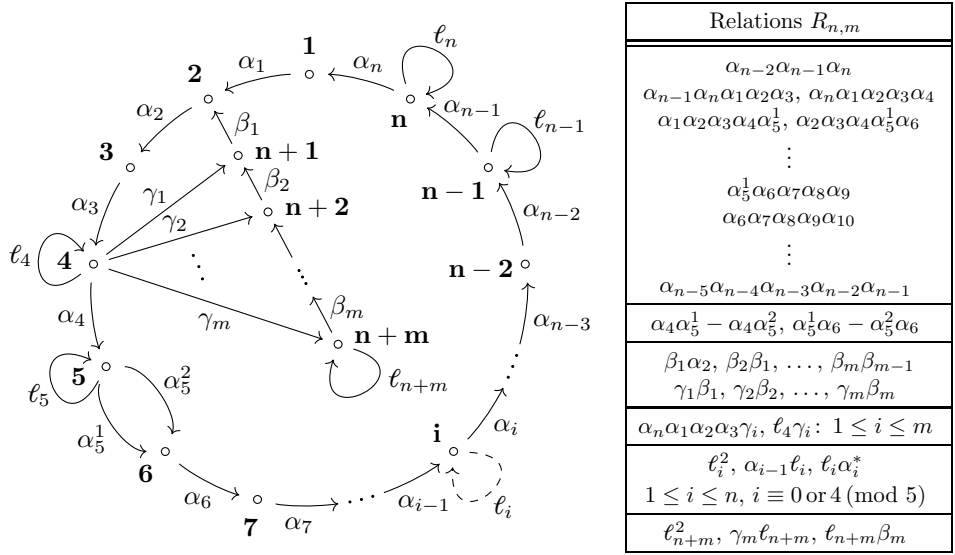
\begin{figure}[h!]		\phantomsection		\hypertarget{fig'}{}
			\vspace*{0.6cm}
			\centering
			\begin{minipage}{0.62\textwidth}
				\centering
				\begin{tikzpicture}[scale=0.65]

					%%%%%%%%%%%%	NODES & NODE LABELS

					\foreach \ang\lab in {90/1, 117.7/2, 145.4/3, 173.1/4, 200.8/5, 228.5/6 , 256.2/7 }{
						\draw  ($(0,0)+(\ang:4.4)$) circle (.08);
						\node at ($(0,0)+(\ang:4.99)$) {$\mathbf{\lab}$};
					}

					%%%

					\draw ($(0,0)+(311.55:4.4)$) circle (.08);
					\node at ($(0,0)+(311.55:3.9)$) {$\mathbf{i}$};
					
					\draw ($(0,0)+(62.3:4.4)$) circle (.08);
					\node at ($(0,0)+(62.3:3.9)$) {$\mathbf{n}$};
					
					\draw ($(0,0)+(34.6:4.4)$) circle (.08);
					\node at ($(0,0)+(34.6:3.55)$) {$\mathbf{n-1}$};
					
					\draw ($(0,0)+(6.9:4.4)$) circle (.08);
					\node at ($(0,0)+(6.9:3.5)$) {$\mathbf{n-2}$};

					%%%

					\draw ($(0,0)+(117.7:3.1)$) circle (.08);
					\node at ($(0,0)+(117.7:3.1)+(5:1)$) {$\mathbf{n+1}$};
					
					\draw ($(0,0)+(117.7:1.8)$) circle (.08);
					\node at ($(0,0)+(117.7:1.8)+(5:1)$) {$\mathbf{n+2}$};

					\draw[fill=black] ($(0,0)+(117.7:0.45)$) circle (.02);
					\draw[fill=black] ($(0,0)+(117.7:0.3)$) circle (.02);
					\draw[fill=black] ($(0,0)+(117.7:0.15)$) circle (.02);
					
					\draw ($(0,0)+(297.7:1.25)$) circle (.08);
					\node at ($(0,0)+(297.7:1.25)+(8:1.1)$) {$\mathbf{n+m}$};

					%%%  ELLIPSIS

					\foreach \ang in {280.75 , 283.85 , 286.95 , 336.15 , 339.25 , 342.35}{
						\draw[fill=black] ($(0,0)+(\ang:4.4)$) circle (.02);
					}

					%%%%%%%%%%%		ARROWS

					\foreach \ang\lab in { 117.7/2 , 145.4/3 , 173.1/4 , 228.5/6 }{
						\draw[->,shorten <=7pt, shorten >=7pt] ($(0,0)+(\ang:4.4)$) arc (\ang:\ang+27.7:4.4);
						\node at ($(0,0)+(\ang+13.85:4.84)$) {$\alpha_{\lab}$};
					}
					
					\draw[->,shorten <=7pt, shorten >=7pt] ($(0,0)+(90:4.4)$) arc (90:117.7:4.4);
					\node at ($(0,0)+(104:4.75)$) {$\alpha_1$};
					\draw[->,shorten <=7pt, shorten >=7pt] ($(0,0)+(62.3:4.4)$) arc (62.3:90:4.4);
					\node at ($(0,0)+(75.4:4.75)$) {$\alpha_n$};
					\draw[->,shorten <=7pt, shorten >=7pt] ($(0,0)+(34.6:4.4)$) arc (34.6:62.3:4.4);
					\node at ($(0,0)+(48:4.95)$) {$\alpha_{n-1}$};
					\draw[->,shorten <=7pt, shorten >=7pt] ($(0,0)+(6.9:4.4)$) arc (6.9:34.6:4.4);
					\node at ($(0,0)+(17.7:5.16)$) {$\alpha_{n-2}$};

					%%%

					\draw[->,shorten <=7pt] ($(0,0)+(256.2:4.4)$) arc (256.2:277.65:4.4);
					\draw[->,shorten >=7pt] ($(0,0)+(290.05:4.4)$) arc (290.05:311.55:4.4);
					\draw[->,shorten <=7pt] ($(0,0)+(311.55:4.4)$) arc (311.55:333.05:4.4);
					\draw[->,shorten >=7pt] ($(0,0)+(345.45:4.4)$) arc (345.45:366.9:4.4);
					\node at ($(0,0)+(267.5:4.8)$) {$\alpha_7$};
					\node at ($(0,0)+(298:4.96)$) {$\alpha_{i-1}$};
					\node at ($(0,0)+(325:4.87)$) {$\alpha_i$};
					\node at ($(0,0)+(352.5:5.17)$) {$\alpha_{n-3}$};

					%%%

					\draw[->,shorten <=7pt, shorten >=7pt] ($(0,0)+(200.8:4.4)$) to [bend left=55]  ($(0,0)+(228.5:4.4)$) ;
					\draw[->,shorten <=7pt, shorten >=7pt] ($(0,0)+(200.8:4.4)$) to [bend right=55]  ($(0,0)+(228.5:4.4)$) ;
					
					\node at ($(0,0)+(214.65:3.23)$) {$\alpha_5^2$};
					\node at ($(0,0)+(214.65:5.35)$) {$\alpha_5^1$};

					%%% 	ARROWS BETA & LABELS

					\draw[->,shorten <=5pt, shorten >=5pt] ($(0,0)+(117.7:3.1)$) to   ($(0,0)+(117.7:4.4)$) ;
					\node at ($(0,0)+(117.7:3.75)+(7:0.5)$) {$\beta_1$};
					
					\draw[->,shorten <=5pt, shorten >=5pt] ($(0,0)+(117.7:1.8)$) to   ($(0,0)+(117.7:3.1)$) ;
					\node at ($(0,0)+(117.7:2.45)+(7:0.5)$) {$\beta_2$};
					
					\draw[->, shorten >=5pt] ($(0,0)+(117.7:0.75)$) to   ($(0,0)+(117.7:1.8)$) ;
					
					\draw[->,shorten <=5pt] ($(0,0)+(297.7:1.25)$) to   ($(0,0)+(297.7:0.15)$) ;
					\node at ($(0,0)+(297.7:0.5)+(10:0.55)$) {$\beta_m$};

					%%%		ARROWS GAMMA & LABELS

					\draw[->,shorten <=6pt, shorten >=6pt] ($(0,0)+(173.1:4.4)$) to   ($(0,0)+(117.7:3.1)$) ;
					
					\draw[->,shorten <=6pt, shorten >=6pt] ($(0,0)+(173.1:4.4)$) to   ($(0,0)+(117.7:1.8)$) ;
					
					\draw[->,shorten <=6pt, shorten >=6pt] ($(0,0)+(173.1:4.4)$) to   ($(0,0)+(297.7:1.25)$) ;

					\path 
					($(0,0)+(173.1:4.4)$)	coordinate (v4)      
					($(0,0)+(117.7:3.1)$)	coordinate (vn+1)
					($(0,0)+(117.7:1.8)$) 	coordinate (vn+2)
					($(0,0)+(297.7:1.25)$)	coordinate (vn+m)
					;
					
					\coordinate (Mid v4 vn+1) at ($(v4)!0.48!(vn+1)$);
					\coordinate (Mid v4 vn+2) at ($(v4)!0.49!(vn+2)$);
					\coordinate (Mid v4 vn+m) at ($(v4)!0.51!(vn+m)$);

					\node at ($(Mid v4 vn+1)+(115:0.35)$) {$\gamma_1$};
					
					\node at ($(Mid v4 vn+2)+(105:0.3)$) {$\gamma_2$};
					
					\draw[fill=black]  ($(Mid v4 vn+2)+(310:0.45)$) circle (.02);
					\draw[fill=black]  ($(Mid v4 vn+2)+(305:0.7)$) circle (.02);
					\draw[fill=black]  ($(Mid v4 vn+2)+(300:0.95)$) circle (.02);
					
					\node at ($(Mid v4 vn+m)+(260:0.35)$) {$\gamma_m$};

					%%%%%%%%%%		LOOPS WITH LABELS

					\draw[->,shorten <=6pt, shorten >=6pt] ($(0,0)+(173.1:4.4)$).. controls +(173.1+45:2) and +(173.1-45:2) .. ($(0,0)+(173.1:4.4)$);
					\node at ($(0,0)+(173.1:5.88)$) {$\lambda_4$};
					
					\draw[->,shorten <=6pt, shorten >=6pt] ($(0,0)+(200.8:4.4)$).. controls +(200.8+41:2) and +(200.8-45:2) .. ($(0,0)+(200.8:4.4)$);
					\node at ($(0,0)+(201.4:5.87)$) {$\lambda_5$};
					
					\draw[->,shorten <=6pt, shorten >=6pt, dashed] ($(0,0)+(311.55:4.4)$).. controls +(311.55+45:2) and +(311.55-45:2) .. ($(0,0)+(311.55:4.4)$);
					\node at ($(0,0)+(311.55:5.92)$) {$\lambda_i$};
					
					\draw[->,shorten <=6pt, shorten >=6pt] ($(0,0)+(62.3:4.4)$).. controls +(62.3+45:2) and +(62.3-45:2) .. ($(0,0)+(62.3:4.4)$);
					\node at ($(0,0)+(61.9:5.88)$) {$\lambda_n$};
					
					\draw[->,shorten <=6pt, shorten >=6pt] ($(0,0)+(34.6:4.4)$).. controls +(34.6+45:2) and +(34.6-45:2) .. ($(0,0)+(34.6:4.4)$);
					\node at ($(0,0)+(34:6.12)$) {$\lambda_{n-1}$};
					
					\draw[->,shorten <=6pt, shorten >=6pt] ($(0,0)+(297.7:1.25)$).. controls +(297.7+45:2) and +(297.7-45:2) .. ($(0,0)+(297.7:1.25)$);
					\node at ($(0,0)+(320:2.9)$) {$\lambda_{n+m}$};

				\end{tikzpicture}	
			\end{minipage}\hfill
			\begin{minipage}{0.36\textwidth}
				\centering
				\resizebox{4.5cm}{!}{
					\begin{tabular}{|c|}
						\hline
						\TTT	Relations $ R_{n, m} $	\BBB \\
						\hhline{|=|}
						
						\TTT	$\a_{n-2} \a_{n-1} \a_n$    \\
						$\a_{n-1} \a_n \a_1 \a_2 \a_3  , \, \a_n \a_1 \a_2 \a_3 \a_4$ \\
						$\a_1 \a_2 \a_3 \a_4 \a_5^1  , \, \a_2 \a_3 \a_4 \a_5^1 \a_6$ \\
						$\vdots$ \\
						$ \a_5^1 \a_6 \a_7 \a_8 \a_9 $ \\
						$\a_6 \a_7 \a_8 \a_9 \a_{10}$  \\
						$\vdots$  \\
						$ \a_{n-5} \a_{n-4} \a_{n-3} \a_{n-2} \a_{n-1}$	 \BBB  \\
						\hline	
						\TTT	$\a_4 \a_5^1 - \a_4 \a_5^2  , \, \a_5^1 \a_6 - \a_5^2 \a_6$	\BBB	\\
						\hline
						\TTT	$\b_1 \a_2  , \, \b_2 \b_1  , \, \ldots  , \, \b_m \b_{m-1}$ \\
						$ \g_1 \b_1  , \, \g_2 \b_2  , \, \ldots  , \, \g_m \b_m $	\BBB \\
						\hline
						\TTT	$ \a_n \a_1 \a_2 \a_3 \g_i , \, \lambda_4 \g_i	
						\colon \, 1 \leq i \leq m $
						\BBB \\ 
						\hline
						\TTT
						$\lambda_i^2   , \,  \a_{i-1} \lambda_i   , \,  \lambda_i \a_i^*  $	\BBB	\\
						$ 1 \leq i \leq n  , \, i \equiv 0 \, \textrm{or} \, 4 \, ( \!\!\!\! \mod 5 ) $
						\BBB	\\
						\hline
						\TTT
						$ \lambda_{n+m}^2 , \,  \gamma_m \lambda_{n+m} , \,  \lambda_{n+m} \beta_m  $
						\BBB \\
						\hline
					\end{tabular}
				}
			\end{minipage}
			\caption{Quiver and relations for $ \La_{n, m} $}
			\label{fig}
			\vspace*{0.2cm}
		\end{figure}

		Our aim is to establish that $ \Fpd \La < \infty $ for $\La = \La_{n,m} $, by showing that the Loewy length of $ \La $ is quasi-uniform. Note that $\La $ is a bound quiver algebra with $\Ll( \La ) = 5 $ and the only indecomposable direct summands of ${}_\La \La $ with non-maximal Loewy length are the ones corresponding to vertices $1$ and $2$. Indeed, we have $\Ll ( \La e_1  ) = 3$ and $\Ll ( \La e_2 ) = 4 $ as the paths $ \a_{n-1} \a_n $ and $ \a_{n-1} \a_n \a_1 $ are non-zero, while the paths $\a_{n-2} \a_{n-1} \a_n $, $ \lambda_n \a_n $, $ \lambda_{n-1} \a_{n-1} $, $ \g_1 \b_1 $ and $ \b_2 \b_1 $ are zero. Therefore,
		\[
		\Fpd \La \leq \max \{ \idim S_\La ( {1} ) , \idim S_\La ( {2} ) \}
		\]
		according to \cref{thm:q.unif.Ll}{}, as $ f_{ \mathsf{n.m}} = e_1 + e_2 $.

		The algebra $\La $ satisfies conditions (i) to (viii) preceding the example. Conditions (i) and (vi) follow immediately from the shape of $Q_{n ,m }$ (see \cite{ILP}{} for (i), and \cite[Lemma~3.12]{Giata2}{} for (vi)) and $\La $ is not monomial as the paths $\a_4 \a_5^1 $ and $\a_4 \a_5^2 $ are equal and non-zero in $\La $. Furthermore, it holds that the finitistic dimensions of $\La $ are non-zero as the regular module $\La_\La $ does not possess a submodule that is isomorphic to $S_{\La ^{ \! \mathrm{op}}} ( {1} ) $, see \cite[Lemma~6.2]{Bass}{}. Thirdly, it is quite straightforward to verify that every arrow of $Q_{n,m}$ occurs in every generating set for $I_{n,m}$, and the algebra $\La $ is not Iwanaga-Gorenstein as \cite[Corollary~4.14]{Giata2}{} applies for the loop $\lambda_{n+m}$ and the arrow $\gamma_m$ (or the arrow $\beta_m$).
		
		To see that condition (vii) is also satisfied, note first that $S_\La ( {i} ) $ has infinite projective dimension for every vertex $i$ such that \mbox{$1 \leq i \leq n $} and $i \equiv 0 \, (\!\!\!\! \mod 5 ) $ or $i \equiv 4 \, (\!\!\!\! \mod 5 ) $, due to the loop $\lambda_i$. If we let vertex $n$ be represented also by $ 0 $, then for every vertex $i$ such that $1 \leq i \leq n $ and $i \equiv 1 \, (\!\!\!\! \mod 5 ) $, it holds that the module $S_\La ( {i-1} )$ is a direct summand of $\Omega^2 ( S_\La ( {i} ) )$. Similarly, the projective dimension of $  S_\La ( {n + m} ) $ is infinite due to the loop $\lambda_{n+m}$ and, if $i$ is such that $ 1 \leq i \leq m-1 $, then the module $S_\La( {n+ i + 1 } )$ is a direct summand of $\Omega^1 ( S_\La ( {n+ i } ) )$. Finally, the module $S_\La ( { n+1 } )$ is a direct summand of $ \Omega^1 ( S_\La ( { 2 } ) ) $ and $ \Omega^2 ( S_\La ( { 3 } ) ) $ and, 
		for every vertex $i$ such that $7 \leq i \leq n$ and $i \equiv 2 \, (\!\!\!\! \mod 5 ) $ or $i \equiv 3 $ $ (\!\!\!\! \mod 5 ) $, the module $S_\La ( {i - 5} )$ is a direct summand of $\Omega^2 ( S_\La ( { i } ) ) $. We conclude that all simple $\La$-modules have infinite projective dimension.

		To compute the upper bound for $ \Fpd \La $ provided by \cref{thm:q.unif.Ll}{}, we check that there are minimal projective resolutions of the form
		\[
		0 \to e_{n-3} \La \to e_{n-4} \La  \to \cdots \to e_7 \La \to  e_6 \La \to e_2 \La \to e_1 \La \to S_{\La^{ \! \mathrm{op}}} ( 1 ) \to 0
		\]
		and
		\[
		0 \to e_{n-2} \La \to e_{n-3} \La  \to \cdots \to e_8 \La \to  e_7 \La \to e_3 \La \to e_2 \La \to S_{\La^{ \! \mathrm{op}}} ( 2 ) \to 0
		\]
		implying that $ \idim S_\La ( {1} ) = \idim S_\La ( {2} ) =  2 \nu - 1$, where $\nu = \frac{n}{5} $, due to the standard duality between finitely generated left and right $\La $-modules. All in all, we deduce that $ 1 \leq \Fpd \La \leq 2 \nu - 1 $.
		
		The software \cite{QPA}{} was used in order to verify that a preliminary version of the algebra $\La_{10,0}$ had some of the properties required for the purpose of this example out of many other candidate algebras.
	\end{exam}

	\begin{rem}
		One can show that $ \fpd \La^{ \! \mathrm{op}} $ is also finite for $ \La = \La_{n , m} $ of \cref{exam:q.unif.Ll}{} by a successive application of \cite[Proposition~2.1]{FS}{} (see also \cite[Theorem~5.5]{arrowrem1}{}). In other words, we create a sequence of algebras $ \La = \La_1 , \La_2 , \ldots , \La_\nu $ (for $ \nu = \frac{n}{5} $) by setting $ \La_{ i+1 } = (1 - f_i) \La_{ i } (1 - f_i) $ for every $ i = 1, 2, \ldots , \nu-1 $, where $ f_i $ is the sum of trivial paths in $ \La_{ i } $ such that the corresponding simple right $ \La_i $-modules have projective dimension at most one. Then \cite[Proposition~2.1]{FS}{} ensures that $ \fpd \La_{ i+1 }^{\! \mathrm{op} } < \infty $ implies $ \fpd \La_{ i }^{ \! \mathrm{op} } < \infty $ for every $ i $. Specifically, one can show that $ f_i = e_{n - 5i + 1 } + e_{n - 5i + 2 } $ for every $ i $ through \cite{ILP}{} and \cite[Lemma~4.11]{Giata2}{}, whence the quiver $ Q_{ i+1 } $ of each algebra $ \La_{ i+1 } $ results from $ Q_i $ by removing vertices $ { n - 5i + 1 } $ and $ { n - 5i + 2 } $, and all their adjacent arrows. However, we also have to add a new `connecting' arrow $ \d_i \colon { n - 5i } \to { n - 5i + 3 } $, which preserves the triangular reduced nature of the produced algebras, and adjust the relations determining $ \La_{ i+1 } $ accordingly from the relations determining $ \La_i $. In particular, algebra $ \La_\nu $ is monomial as the arrows $ \a_5^1 $ and $ \a_5^2 $ (along with the arrows $ \a_6 $ and $ \a_7 $) are replaced by the connecting arrow $ \d_{ \nu-1 } \colon { 5 } \to { 8 } $, which gets rid of the non-monomial relations $ \a_4 \a_5^1 - \a_4 \a_5^2 $ and $ \a_5^1 \a_6 - \a_5^2 \a_6 $ present in all the previous algebras of the sequence. We conclude that $ \fpd \La^{ \! \mathrm{op}} < \infty $, since the finitistic dimensions of monomial algebras are finite by \cite{GKK}{}.
	\end{rem}
	
	If $\La = \La_{n , m}$ is the algebra of the above example, then there is an exact sequence of the form
	\[
	0 \to S_{\La^{ \! \mathrm{op}}} ( 2 ) \to e_{n+1} \La \to e_{n+2} \La \to \cdots \to e_{n+m -1} \La \to S_{\La^{ \! \mathrm{op}}} ( {n+m-1} ) \to 0
	\]
	which is a truncation of the minimal projective resolution of $S_{\La^{ \! \mathrm{op}}} ( {n + m - 1} )$. In particular, it holds that $\idim S_\La ( { n + m - 1 } ) = 2 \nu + m - 2 $. We close this section with two observations based on this fact.
	
	Firstly, there are bound quiver algebras $\La $ with quasi-uniform Loewy length such that the supremum of all finite injective dimensions of simple modules is arbitrarily bigger than the upper bound for $\Fpd \La $ provided by \cref{thm:q.unif.Ll}{}. In particular, the upper bound for $\fpd \La$ provided by the vertex removal operation \cite{arrowrem1}{} can be arbitrarily bigger than the upper bound provided by \cref{thm:q.unif.Ll}{}.

	Secondly, the difference $\fpd \La ^{ \! \mathrm{op}} - \Fpd \La $ can be arbitrarily big for non-monomial bound quiver algebras $\La $ with non-zero finitistic dimensions.
	
	\begin{cor}
			\label{cor:q.unif.Ll.1}
		It holds that $ \fpd  \La_{n , m} ^{ \! \mathrm{op}} - \Fpd \La_{n , m} \geq m - 1 $.
	\end{cor}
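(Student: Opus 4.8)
The plan is to estimate $\Fpd \La_{n,m}$ from above and $\fpd(\La_{n,m})^{\mathrm{op}}$ from below, combining only facts already assembled in \cref{exam:q.unif.Ll} and in the paragraph preceding this corollary. For the upper bound I would quote \cref{thm:q.unif.Ll}: since $1 - e_{\mathsf{m}} = e_1 + e_2$ for $\La = \La_{n,m}$, and since $\idim S_\La(1) = \idim S_\La(2) = 2\nu - 1$ with $\nu = n/5$ (computed in the example from the explicit minimal projective resolutions of $S_{\La\!^{\mathrm{op}}}(1)$ and $S_{\La\!^{\mathrm{op}}}(2)$ via the standard duality $D$), we obtain $\Fpd \La_{n,m} \leq \idim \topp_\La { \La(1 - e_{\mathsf{m}}) } = 2\nu - 1$.

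For the lower bound I would exhibit a finitely generated right $\La$-module of \emph{finite} projective dimension equal to $2\nu + m - 2$; assume $m \geq 2$, the case $m = 1$ being immediate since then $\fpd(\La_{n,1})^{\mathrm{op}} \geq \pd S_{\La\!^{\mathrm{op}}}(2) = 2\nu - 1 \geq \Fpd \La_{n,1}$. Take $S_{\La\!^{\mathrm{op}}}(n+m-1)$ and read off its projective dimension from the exact sequence
\[
0 \to S_{\La\!^{\mathrm{op}}}(2) \to e_{n+1}\La \to e_{n+2}\La \to \cdots \to e_{n+m-1}\La \to S_{\La\!^{\mathrm{op}}}(n+m-1) \to 0 ,
\]
displayed above the corollary, which is a truncation of the minimal projective resolution consisting of exactly $m-1$ projective terms; hence $\Omega^{m-1}(S_{\La\!^{\mathrm{op}}}(n+m-1)) \cong S_{\La\!^{\mathrm{op}}}(2)$ and $\pd S_{\La\!^{\mathrm{op}}}(n+m-1) = (m-1) + \pd S_{\La\!^{\mathrm{op}}}(2)$. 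The duality $D$ gives $\pd S_{\La\!^{\mathrm{op}}}(2) = \idim S_\La(2) = 2\nu - 1 < \infty$, so $\pd S_{\La\!^{\mathrm{op}}}(n+m-1) = 2\nu + m - 2$ is finite — equivalently, this is the value $\idim S_\La(n+m-1)$ recorded just before the corollary — and therefore $\fpd(\La_{n,m})^{\mathrm{op}} \geq 2\nu + m - 2$.

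Subtracting the two estimates gives $\fpd(\La_{n,m})^{\mathrm{op}} - \Fpd \La_{n,m} \geq (2\nu + m - 2) - (2\nu - 1) = m - 1$, which is the assertion. I do not expect a genuine obstacle here: all the substantive work — the quasi-uniformity of the Loewy length of $\La_{n,m}$, the resolutions computing $\idim S_\La(1)$ and $\idim S_\La(2)$, and the truncated resolution of $S_{\La\!^{\mathrm{op}}}(n+m-1)$ — is carried out in \cref{exam:q.unif.Ll}, and the corollary is essentially arithmetic built on top of it. The one point deserving care is that $S_{\La\!^{\mathrm{op}}}(n+m-1)$ genuinely has \emph{finite} projective dimension, so that it legitimately contributes to $\fpd(\La_{n,m})^{\mathrm{op}}$; this is precisely why one must use the syzygy identity $\Omega^{m-1}(S_{\La\!^{\mathrm{op}}}(n+m-1)) \cong S_{\La\!^{\mathrm{op}}}(2)$ together with the finiteness of $\pd S_{\La\!^{\mathrm{op}}}(2)$, rather than merely a lower bound on the length of some projective resolution of $S_{\La\!^{\mathrm{op}}}(n+m-1)$.
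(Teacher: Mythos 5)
Your proposal is correct and follows essentially the same route as the paper: combine the upper bound $\Fpd\La_{n,m}\leq 2\nu-1$ from \cref{thm:q.unif.Ll} with the lower bound $\fpd(\La_{n,m})^{\mathrm{op}}\geq\pd S_{\La\!^{\mathrm{op}}}(n+m-1)=2\nu+m-2$ read off from the displayed truncated resolution, and subtract. Your explicit treatment of the degenerate case $m=1$ (where that truncated sequence and the identity $\pd S_{\La\!^{\mathrm{op}}}(n+m-1)=2\nu+m-2$ do not literally apply) is a small but genuine point of extra care that the paper's one-line proof glosses over.
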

	
	\begin{proof}
		The corollary follows from \cref{exam:q.unif.Ll}{} and the above discussion as
			\[
				\Fpd \La_{n , m} \leq 2 \nu - 1  \leq 2 \nu + m - 2   \leq	\fpd \La_{n , m} ^{ \! \mathrm{op} }
			\]
		where the last inequality follows from $ 2 \nu + m - 2 = \pd  S_{\La_{n, m}^{ \! \mathrm{op} }}({n+m-1}) $.
	\end{proof}


\begin{thebibliography}{29}
		
		
		
		\bibitem{AndersonFuller}
		\textsc{F.W.~Anderson, K.R.~Fuller},
		\emph{Rings and Categories of Modules},
		Springer, New York, 1992.
		%	DOI: 10.1007/978-1-4612-4418-9.
		
		\bibitem{ASS}
		\textsc{I.~Assem, D.~Simson, A.~Skowronski},
		\emph{Elements of the Representation Theory of Associative Algebras, Volume 1: Techniques of Representation Theory},
		University Press, Cambridge, 2006.
		%	DOI: 10.1017/CBO9780511614309.	
		
		\bibitem{Ausgldim}
		\textsc{M.~Auslander},
		\emph{On the dimension of modules and algebras (III): Global dimension},
		Nagoya Math.\ J.\ \textbf{9} (1955), 67--77.
		%	DOI: 10.1017/S0027763000023291.

		\bibitem{applications.of.contrvar.finite}
		\textsc{M.~Auslander, I.~Reiten},
		\emph{Applications of contravariantly finite subcategories},
		Adv.\ Math.\ \textbf{86} (1991), no.\ 1, 111--152.
		%	DOI: 10.1016/0001-8708(91)90037-8
		
		\bibitem{repth}
		\textsc{M.~Auslander, I.~Reiten, S.O.~Smal\o{}},
		\emph{Representation Theory of Artin Algebras},
		University Press, Cambridge, 1995.
		%	DOI: 10.1017/CBO9780511623608.
		
		\bibitem{findimdef}
		\textsc{M.~Auslander, D.A.~Buchsbaum},
		\emph{Homological dimension in local rings},
		Trans.\ Amer.\ Math.\ Soc.\ \textbf{85} (1957), no.\ 2, 390--405.
		%	DOI: 10.2307/1992937.	
		
		\bibitem{Bass}
		\textsc{H.~Bass},
		\emph{Finitistic dimension and a homological generalization of semi-primary rings},
		Trans.\ Amer.\ Math.\ Soc.\ \textbf{95} (1960), no.\ 3, 466--488.
		%	DOI: 10.2307/1993568.
		
		\bibitem{EHIS}
		\textsc{K.~Erdmann, T.~Holm, O.~Iyama, J.~Schr\"{o}er},
		\emph{Radical embeddings and representation dimension},
		Adv.\ Math.\ \textbf{185} (2004), no.\ 1, 159--177.
		%	DOI: 10.1016/S0001-8708(03)00169-5.
		
		\bibitem{FGR}
		\textsc{R.M.~Fossum , P.A.~Griffith , I.~Reiten},
		\emph{Trivial Extensions of Abelian Categories},
		Lecture Notes in Mathematics \textbf{456}, Springer Berlin, Heidelberg, 1975.
		
		\bibitem{FS}
		\textsc{K.R.~Fuller, M.~Saorin},
		\emph{On the finitistic dimension conjecture for artinian rings},
		Manuscripta Math.\ \textbf{74} (1992), 117--132.
		%	DOI: 10.1007/BF02567662.
		
		\bibitem{Giata2}
		\textsc{O.~Giatagantzidis},
		\emph{Arrow reductions for the finitistic dimension conjecture}, arXiv:2507.12978 (2025), 51 pages.
		
		
		\bibitem{GKK}
		\textsc{E.L.~Green, E.~Kirkman, J.~Kuzmanovich},
		\emph{Finitistic dimensions of finite dimensional monomial algebras},
		J.\ Algebra \textbf{136} (1991), no.\ 1, 37--50.
		%	DOI: 10.1016/0021-8693(91)90062-D.
		
		\bibitem{arrowrem1}
		\textsc{E.L.~Green, C.~Psaroudakis, \O{}.~Solberg},
		\emph{Reduction techniques for the finitistic dimension},
		Trans.\ Amer.\ Math.\ Soc.\ \textbf{374} (2021), 6839--6879.
		%	DOI: 10.1090/tran/8409.
		
		\bibitem{GH}
		\textsc{E.L.~Green, B.Z.~Huisgen},
		\emph{Finitistic dimension of artinian rings with vanishing radical cube},
		Math.\ Z.\ \textbf{206} (1991), 505--526.
		%	DOI: 10.1007/BF02571358.
		
		\bibitem{Huisgen2}
		\textsc{B.Z.~Huisgen},
		\emph{Predicting syzygies over monomial relations algebras},
		Manuscripta Math.\ \textbf{70} (1991), 157--182.
		%	DOI: 10.1007/BF02568368.
		
		\bibitem{Huisgen}
		\textsc{B.Z.~Huisgen},
		\emph{Homological domino effects and the first Finitistic Dimension Conjecture},
		Invent.\ Math.\ \textbf{108} (1992), 369--383.
		%	DOI: 10.1007/BF02100610.
		
		\bibitem{Huisgentale}
		\textsc{B.Z.~Huisgen},
		\emph{The finitistic dimension conjectures -- A tale of 3.5 decades},
		in: A.~Facchini, C.~Menini (eds), Abelian Groups and Modules, Mathematics and Its Applications \textbf{343}, Springer, Dordrecht, 1995, 501--517.
		%	DOI: 10.1007/978-94-011-0443-2\textunderscore{}41.
		
		\bibitem{ILP}
		\textsc{K.~Igusa, S.~Liu, C.~Paquette},
		\emph{A proof of the strong no loop conjecture},
		Adv.\ Math.\ \textbf{228} (2011), no.\ 5, 2731--2742.
		%	DOI: 10.1016/j.aim.2011.06.042
		
		\bibitem{Kaplansky}
		\textsc{I.~Kaplansky},
		\emph{Fields and Rings},
		University of Chicago Press, Chicago, 1972.
		
		\bibitem{KirkKuzm}
		\textsc{E.~Kirkman, J.~Kuzmanovich},
		\emph{Algebras with large homological dimensions},
		Proc.\ Amer.\ Math.\ Soc.\ \textbf{109} (1990), no.\ 4, 903--906.
		%	DOI: 10.1090/S0002-9939-1990-1027096-4.
		
		\bibitem{KKS}
		\textsc{E.~Kirkman, J.~Kuzmanovich, L.~Small},
		\emph{Finitistic dimensions of noetherian rings},
		J.\ Algebra \textbf{147} (1992), no.\ 2, 350--364.
		%	DOI: 10.1016/0021-8693(92)90210-D.
		
		\bibitem{Krause}
		\textsc{H.~Krause},
		\emph{Homological Theory of Representations},
		University Press, Cambridge, 2021.
		
		\bibitem{bifiniteexts}
		\textsc{J.W.~MacQuarrie, F.d.R.~Naves},
		\emph{Quotient bifinite extensions and the finitistic dimension conjecture},
		Proc.\ Amer.\ Math.\ Soc.\ \textbf{152} (2024), 585--590.
		%	DOI: 10.1090/proc/16569.
		
		\bibitem{Mochi}
		\textsc{H.Y.~Mochizuki},
		\emph{Finitistic global dimension for rings},
		Pac.\ J.\ Math.\ \textbf{15} (1965), no.\ 1, 249--258.
		
		\bibitem{Osofsky}
		\textsc{B.L.~Osofsky},
		\emph{Global dimension of valuation rings},
		Trans.\ Amer.\ Math.\ Soc.\ \textbf{127} (1967), no.\ 1, 136--149.
		%	DOI: 10.2307/1994637.
		
		\bibitem{QPA}
		\textsc{The QPA-team},
		\emph{QPA - Quivers, path algebras and representations - a GAP package},
		Version \textbf{1.33} (2022), https://folk.ntnu.no/oyvinso/QPA/.
		
		\bibitem{Small}
		\textsc{L.W.~Small},
		\emph{A change of rings theorem},
		Proc.\ Amer.\ Math.\ Soc.\ \textbf{19} (1968), no.\ 3, 662--666.
		%	DOI: 10.2307/2035858.
		
		\bibitem{FDCandfingldim}
		\textsc{C.C.~Xi},
		\emph{On the finitistic dimension conjecture II: Related to finite global dimension},
		Adv.\ Math.\ \textbf{201} (2006), no.\ 1, 116--142.
		%	DOI: 10.1016/j.aim.2005.02.002.
		
		\bibitem{FDCandrelprojmods}
		\textsc{C.C.~Xi, D.M.~Xu},
		\emph{The finitistic dimension conjecture and relatively projective modules},
		Commun.\ Contemp.\ Math.\ \textbf{15} (2013), no.\ 2.	
		%	DOI: 10.1142/S0219199713500041.
		
	
		
	\end{thebibliography}
\end{document}